\newtheorem{theorem}{Theorem}[section]
\newtheorem{proposition}[theorem]{Proposition}
\newtheorem{lemma}[theorem]{Lemma}
\newtheorem{assumption}[theorem]{Assumption}
\theoremstyle{remark}
\numberwithin{equation}{section}
\begin{document}
	
	\pagenumbering{arabic}	
	\title[Delayed K-KP-II system]{
Stabilization of the Kawahara-Kadomtsev-Petviashvili equation with time-delayed feedback
}
	\author[Capistrano-Filho]{Roberto de A. Capistrano-Filho*}
	\address{Departamento de Matem\'atica,  Universidade Federal de Pernambuco (UFPE), 50740-545, Recife (PE), Brazil.}
	\email{roberto.capistranofilho@ufpe.br}
	
	\author[Gonzalez Martinez]{Victor H. Gonzalez Martinez}
	\email{victor.martinez@ufpe.br}
		\author[Muñoz]{Juan Ricardo Muñoz}
			\email{juan.ricardo@ufpe.br}
	\thanks{*Corresponding author: roberto.capistranofilho@ufpe.br}
	\subjclass[2010]{35Q53, 93D15, 93D30, 93C20}
	\keywords{KP system, Delayed system, Damping mechanism, Stabilization}
	
	\begin{abstract}
Results of stabilization for the higher order of the Kadomtsev-Petviashvili equation are presented in this manuscript.  Precisely, we prove with two different approaches that under the presence of a damping mechanism and an internal delay term (anti-damping) the solutions of the Kawahara-Kadomtsev-Petviashvili equation are locally and globally exponentially stable.  The main novelty is that we present the optimal constant, as well as the minimal time,  that ensures that the energy associated with this system goes to zero exponentially.
	\end{abstract}

	\maketitle
	
\section{Introduction}
\label{Sec0}
In the last years, properties of the asymptotic models for water waves have been extensively studied to understand the full water wave system\footnote{ See for instance  \cite{BLS, Lannes} and references therein, for a rigorous justification of various asymptotic models for surface and internal waves.}.  As well know,  we can formulate the waves as a free boundary problem of the incompressible, irrotational Euler equation in an appropriate non-dimensional form. Some physical conditions give us the so-called long waves or shallow water waves.  For example,  in one spatial dimensional case the so-called Kawahara equation which is an equation derived by Hasimoto and Kawahara in \cite{Hasimoto1970,Kawahara} that takes the form 
 \begin{equation}\label{eq:Kawahara} 
 \pm2 u_t + 3uu_x - \nu u_{xxx} +\frac{1}{45}u_{xxxxx} = 0.
   \end{equation} 

If we look for two spatial dimensional,  wave phenomena that exhibit weak transversality and weak nonlinearity are modeled by the Kadomtsev-Petviashvili (KP) equation
 \begin{equation}\label{eq:KP0a} 
 u_t+\alpha u_{xxx} +\gamma{\partial_x^{-1}} u_{yy}+uu_x = 0,
  \end{equation} 
  where $u=u(x,y,t)$ and $\alpha, \beta, \gamma$ are constants it was introduced by Kadomtsev and Petviashvili (see~\cite{kp70}) in 1970.  In 1993, Karpman included the  higher-order dispersion in \eqref{eq:KP0a} leads to a fifth-order generalization of the KP equation \cite{Karpman}
   \begin{equation}\label{eq:KP0} 
 u_t+\alpha u_{xxx} + \beta u_{xxxxx}+\gamma{\partial_x^{-1}} u_{yy}+uu_x = 0,
  \end{equation} 
  which will be called the Kawahara-Kadomtsev-Petviashvili equation (K-KP).  Note that, by scaling transformations on the variables $x, t$, and $u$, the coefficients in equation \eqref{eq:KP0} can be set to $\alpha>0, \beta<0, \gamma^2=1$. For the sequel, we consider this scaled form of the equation: 
   \begin{equation}\label{eq:KP0b} 
    u_t+u u_x+\alpha u_{x x x}+\beta u_{x x x x x}+\gamma \partial_x^{-1} u_{y y}=0, \quad \gamma=\pm 1 . 
      \end{equation} 
 When $\gamma=-1$ we will refer to the case as K-KP-I and for  $\gamma=1$ as K-KP II, respectively. This is motivated in analogy with the usual terminology for the KP equation, which distinguishes the two cases for the sign of the ratio of the highest derivative terms in $x$ and $y$, that is,  focusing and defocusing cases, respectively.

It is important to point out that there are several physical applications in modeling long water waves in a shallow water regime with a strong dispersion represented by systems \eqref{eq:Kawahara}--\eqref{eq:KP0b}.  We can cite at least two of them, the first one is to describe both the wave speed and the wave amplitude \cite{Haragus}, and the second one is modeling plasma waves with strong dispersion \cite{Kawahara}.

\subsection{Problem setting}	
There is an important advance in control theory to understand how the damping mechanism acts in the energy of systems governed by a partial differential equation.  In particular, exponential stability for dispersive equations related to water waves posed on bounded domains has been intensively studied.  For example, it is well known that the KdV equation~\cite{Zuazua2002}, Boussinesq system of KdV-KdV type~\cite{Pazoto2008}, Kawahara equation~\cite{Araruna2012} and others are exponentially stable using the Compactness-Uniqueness developed by J.L. Lions~\cite{Lions1988}.  Other results as obtained in~\cite{Cerpa2021} and in~\cite{Capistrano2018} are obtained using Urquiza's and Backstepping approach. All these results use damping mechanisms in the equation or the boundary as a control.

Recently,  in \cite{boumediene, martinez2022}, the authors obtained exponential decay for a fifth-order KdV type equation via the Compactness-Uniqueness argument and  Lyapunov approach.  Additionally to that,  in~\cite{Panthee2011} and~\cite{ailton2021},  exponential decay for the KP-II and K-KP-II were shown\footnote{See also the reference therein for stabilization of KP-II and K-KP-II.}.  In both works,  the authors can prove regularity and well-posedness for these equations and show that the energy associated with this equation decays exponentially in the presence of a damping term acting in the equation.

As we can see in these articles there is interest in the mathematical context in the study of the asymptotic behavior of the solution of the equation  \eqref{eq:KP0b}.  Additionally, as pointed out, the model under consideration in this article has importance in the context of the dispersive equation as well as, physical motivation.  So,  motived by \cite{boumediene, martinez2022,ailton2021,Panthee2011} we will analyze the qualitative properties of the initial-boundary value problem for the K-KP-II equation posed on a bounded domain $\Omega=(0,L)\times(0,L)\subset \mathbb{R}^2$ with localized damping and delay terms
\begin{equation}\label{eq:KP}
\begin{cases}
	\begin{aligned} 
		&\partial_tu(x,y,t) + \alpha \partial^3_xu(x,y,t) + \beta \partial^5_xu(x,y,t)\\
		&+ \gamma\partial_x^{-1}\partial^2_y u(x,y,t) + \frac{1}{2}\partial_x(u^2(x,y,t))\\
		&+ a(x,y)u(x,y,t) + b(x,y)u(x,y,t-h)=0,
	\end{aligned} & (x,y)\in \Omega,\ t>0. \\
	u(0,y,t)=u(L,y,t)=0 ,& y\in(0,L),\ t\in(0,T), \\
    \partial_xu(L,y,t) = \partial_xu(0,y,t)=\partial^2_xu(L,y,t)=0,& y\in(0,L),\ t\in(0,T), \\
	u(x,L,t)=u(x,0,t)=0,& x\in(0,L),\ t\in(0,T),\\
	u(x,y,0)=u_0(x,y), \quad
	u(x,y,t)= z_0(x,y,t), & (x,y)\in\Omega,\ t\in(-h,0).
\end{cases}
\end{equation}
Here $h>0$ is the time delay,   $\alpha>0$, $\gamma>0$ and $\beta<0$ are real constants.  Additionally,  define the operator $\partial_x^{-1}:=\partial_x^{-1} \varphi(x,y,t)= \psi(x,y,t)$ such that $\psi(L,y,t)=0$ and $\partial_x\psi(x,y,t)=\varphi(x,y,t)$\footnote{It can be shown that the definition of operator $\partial_x^{-1}$ is equivalent to $\partial_x^{-1} u(x,y,t) = \int_x^L u(s,y,t)\,ds$.} and,  for our purpose,  let us consider the following assumption.

	\begin{assumption}\label{A1}
		The real functions $a\left(x,y\right)$ and $b\left(x,y\right)$ are nonnegative belonging to $L^\infty(\Omega)$. Moreover, $a(x,y) \geq a_0>0$ is almost everywhere in a nonempty open subset $\omega \subset \Omega$.
	\end{assumption}

Our propose here is to present, for the first time, the K-KP-II system not with only a damping mechanism $a(x,y)u$, which plays the role of a feedback-damping mechanism (see e.g.  \cite{ailton2021}), but also with an anti-damping, that is, some feedback such that our system does not have decreasing energy.  In this context,  we would like to prove that the energy associated with the solutions of the system \eqref{eq:KP}
\begin{equation}\label{eq:KPen}
\begin{split}
E_{u}(t) =&\frac{1}{2} \int_0^L\int_0^L u^2(x,y,t) \,dx\,dy \\&+\frac{h}{2} \int_0^L\int_0^L\int_0^1 b(x,y)u^2(x,y,t-\rho h) \,d\rho\,dx\,dy.
\end{split}
\end{equation}
decays exponentially.  Precisely, we want to give an answer to the following question:

\vspace{0.2cm}

\textit{Does $ E_u(t) \rightarrow 0$ as $ t \rightarrow \infty$? If it is the case, can we give the decay rate?}

\subsection{Notation and main results}
Before presenting answers to this question, let us introduce the functional space that will be necessary for our analysis.  Given $\Omega \subset \mathbb{R}^2$ let us define $X^k(\Omega)$ to be the Sobolev space
\begin{equation}\label{eq:Xk}
X^k(\Omega):=
\left\{
\begin{array}
[c]{l}
\varphi \in H^k(\Omega) \colon {\partial_x^{-1}} \varphi(x,y)= \psi(x,y)\in H^k(\Omega)\text{ such that, }\\
\psi(L,y)=0\text{ and }\partial_x\psi(x,y)=\varphi(x,y)
\end{array}
\right\}
\end{equation} 
endowed with the norm
$
\left\lVert \varphi \right\rVert_{X^k(\Omega)}^2 = \left\lVert \varphi \right\rVert_{H^k(\Omega)}^2 + \left\lVert {\partial_x^{-1}} \varphi \right\rVert_{H^k(\Omega)}^2.
$
We also define the normed space $H_x^k(\Omega)$,
\begin{equation}\label{eq:Hxk}
H_x^k(\Omega):=
	\left\lbrace%
	 	\varphi \colon \partial_x^j \varphi \in L^2(\Omega),\ \text{for } 0\leq j \leq k
	 \right\rbrace 
\end{equation}
with the norm $\left\lVert \varphi \right\rVert_{H_x^k(\Omega)}^2 =\sum_{j=0}^k \left\lVert \partial_x^j \varphi\right\rVert_{L^2(\Omega)}^2$ and the space
\begin{equation}\label{eq:Xxk}
X_{x}^{k}(\Omega):=\left\{
\begin{array}
[c]{l}
\varphi \in H_x^k(\Omega) \colon {\partial_x^{-1}} \varphi(x,y)= \psi(x,y)\in H_x^k(\Omega)\text{ such that }\\
\psi(L,y)=0\text{ and }\partial_x\psi(x,y)=\varphi(x,y)
\end{array}
\right\}
\end{equation}
with $\left\lVert \varphi \right\rVert_{X_x^k(\Omega)}^2 = \left\lVert \varphi \right\rVert_{H_x^k(\Omega)}^2 + \left\lVert {\partial_x^{-1}} \varphi \right\rVert_{H_x^k(\Omega)}^2.$
Finally,   $H_{x0}^k(\Omega)$ will denote the closure of $C_0^\infty(\Omega)$ in $H_x^k(\Omega)$. 

The next result will be used repeatedly throughout the article:

\begin{theorem}[{\cite[Theorem~15.7]{besov79}}]\label{thm:Besov}
Let $\beta$ and $\alpha^{(j)}$, for $j = 1,\dots, N$, denote $n$- dimensional multi-indices with non-negative-integer-valued components. Suppose that $1 < p^{(j)} < \infty$, $1 < q < \infty$, $0 < \mu_j < 1$ with
$$ \sum_{ j = 1 }^N \mu_j = 1, \quad
\frac{1}{q} \leq \sum_{j = 1}^N \frac{\mu_j}{p^{(j)}},\quad \text{and} \quad \beta - \frac{1}{q} = \sum_{j=1}^N \mu_j\left(\alpha^{(j)} - \frac{1}{p^{(j)}}\right).$$
Then, for $f(x)\in C_0^\infty(\mathbb{R}^n)$,
\begin{equation*}
\left\lVert
	D^\beta f
\right\rVert_{q} 
\leq C \prod_{j=1}^N 
\left\lVert 
	D^{\alpha^{(j)}}f
\right\rVert_{p^{(j)}}^{\mu_j}.
\end{equation*}
Where, for non-negative multi-index  $\beta = (\beta_1,\dots, \beta_N)$  we denote $D^\beta$ by 
$D^\beta = D_{x_1}^{\beta_1}\dots D_{x_n}^{\beta_n}$
and 
$ D_{x_i}^{\beta_i} = \frac{\partial^{\beta_i}}{\partial x_i^{k_i}}$
\end{theorem}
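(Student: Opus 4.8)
The plan is to treat this as a multidimensional, multiplicative Gagliardo--Nirenberg inequality and to reduce it, through scaling and Hölder's inequality, to a single fundamental estimate. First I would record the role of each hypothesis. Applying the dilation $f \mapsto f(\lambda\,\cdot)$ with $\lambda>0$ to both sides and tracking the powers of $\lambda$ that appear shows that the two sides can scale identically only under a dimensional-balance condition of exactly the form displayed; this identifies the third hypothesis $\beta-\frac1q=\sum_{j=1}^N\mu_j(\alpha^{(j)}-\frac1{p^{(j)}})$ as the necessary homogeneity relation, which cannot be dispensed with. The normalization $\sum_j\mu_j=1$ guarantees that the weights $\mu_j$ define a genuine probability average, while the admissibility inequality $\frac1q\le\sum_j\frac{\mu_j}{p^{(j)}}$ is the condition ensuring that the relevant Sobolev embedding, rather than a mere interpolation, is available on the right-hand side.

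The analytic heart of the argument is a \emph{fundamental two-term estimate}: for a fixed coordinate direction $e_i$ one controls an intermediate derivative by the geometric mean of a higher and a lower one, schematically $\|D^\gamma f\|_{s}\le C\,\|D^{\gamma+e_i}f\|_{s_1}^{1/2}\,\|D^{\gamma-e_i}f\|_{s_2}^{1/2}$. In one variable this is the classical Landau--Kolmogorov inequality and follows from the fundamental theorem of calculus together with integration by parts; the $n$-dimensional version is obtained by applying the one-dimensional estimate on the $x_i$-slice and then integrating over the remaining variables via Fubini and Hölder. Equivalently, one may start from a Sobolev integral representation of $f$ against a Riesz-type kernel and invoke the Hardy--Littlewood--Sobolev inequality; this is the route that makes the admissibility condition $\frac1q\le\sum_j\frac{\mu_j}{p^{(j)}}$ transparent.

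With the fundamental estimate in hand, I would combine the $N$ factors by Hölder's inequality, the normalization $\sum_j\mu_j=1$ allowing the weights to be distributed across the factors so as to produce the product $\prod_{j=1}^N\|D^{\alpha^{(j)}}f\|_{p^{(j)}}^{\mu_j}$ on the right, while the homogeneity relation forces the differentiation order and integrability on the left to collapse to exactly $D^\beta$ and $L^q$. The passage from $C_0^\infty(\mathbb{R}^n)$ to the stated conclusion is then routine by density, and the constant $C$ depends only on $n$, $N$, the exponents, and the weights.

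The step I expect to be the main obstacle is the \emph{multi-index bookkeeping}. Because the homogeneity relation couples the differentiation orders in all $n$ coordinate directions at once, one cannot interpolate along a single direction and must instead iterate the fundamental two-term estimate along a carefully chosen chain of multi-indices connecting the $\alpha^{(j)}$ to $\beta$, keeping the intermediate integrability exponents consistent at every step. A secondary difficulty is the borderline case of the admissibility condition: when $\frac1q=\sum_j\frac{\mu_j}{p^{(j)}}$ the estimate is a pure interpolation, whereas strict inequality forces a genuine embedding, and these two regimes must be patched together --- the former through the fundamental estimate and Hölder, the latter through Hardy--Littlewood--Sobolev --- with uniform control on the constant $C$.
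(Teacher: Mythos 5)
You should first note that the paper does not prove this statement at all: it is imported verbatim as \cite[Theorem~15.7]{besov79} and used as a black box, so there is no internal proof to compare against. Judged on its own terms, your sketch correctly identifies the scaling argument that makes the balance relation $\beta-\frac1q=\sum_j\mu_j\bigl(\alpha^{(j)}-\frac1{p^{(j)}}\bigr)$ necessary, and correctly names the two mechanisms in play (pure interpolation when $\frac1q=\sum_j\mu_j/p^{(j)}$, a genuine embedding via integral representations and Hardy--Littlewood--Sobolev when the inequality is strict); the latter route is in fact the one Besov--Il'in--Nikol'skii actually take, since the entire apparatus of their book is built to represent $D^\beta f$ through kernels against the $D^{\alpha^{(j)}}f$ and then apply convolution estimates. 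One small correction: an isotropic dilation $f\mapsto f(\lambda\,\cdot)$ only forces the \emph{summed} condition $|\beta|-\frac{n}{q}=\sum_j\mu_j\bigl(|\alpha^{(j)}|-\frac{n}{p^{(j)}}\bigr)$; the componentwise multi-index identity stated in the theorem comes from anisotropic dilations $x_i\mapsto\lambda_i x_i$.

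The genuine gap is at what you yourself call the analytic heart. The two-term estimate $\|D^\gamma f\|_s\le C\|D^{\gamma+e_i}f\|_{s_1}^{1/2}\|D^{\gamma-e_i}f\|_{s_2}^{1/2}$, iterated along chains of adjacent multi-indices, can only ever produce dyadic-rational, symmetric weights and a restricted family of endpoint multi-indices; passing from that to arbitrary $N$, arbitrary $\mu_j\in(0,1)$ with $\sum_j\mu_j=1$, arbitrary $\alpha^{(j)}$, and arbitrary exponents $p^{(j)}$ is not ``bookkeeping'' --- it is essentially the content of the theorem. H\"older's inequality distributes integrability exponents across a \emph{product of functions}, not across a product of norms of the \emph{same} function at different derivative orders, so the step ``combine the $N$ factors by H\"older'' does not do what you need. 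Moreover, the slice-and-integrate step hides a real obstruction: after applying the one-dimensional Landau--Kolmogorov inequality on each $x_i$-line you must integrate an expression of the form $\bigl(\int g^{1/2}h^{1/2}\bigr)$ over the remaining variables with mismatched inner and outer exponents, and Minkowski's integral inequality points the wrong way when the inner exponent exceeds the outer one. This is precisely why the reference builds the integral-representation machinery rather than iterating one-dimensional inequalities. To make your proposal into a proof you would need to commit to the representation-formula route throughout: write $D^\beta f$ as a sum of convolutions of the $D^{\alpha^{(j)}}f$ against kernels with the homogeneity dictated by the balance relation, and estimate each term by Young or Hardy--Littlewood--Sobolev, which is where the hypothesis $\frac1q\le\sum_j\mu_j/p^{(j)}$ genuinely enters.
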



The first result of the manuscript ensures that without a restrictive assumption on the length $L$ of the domain and with the weight of the delayed feedback small enough the energy \eqref{eq:KPen} associated with the solution of the system \eqref{eq:KP} are locally stable.
\begin{theorem}[Optimal local stabilization]\label{th:KPes}
Assume that the functions $a(x,y), b(x,y)$ satisfy the conditions given in Assumption \ref{A1}. Let $L>0$, $\xi>1$, $0<\mu<1$ and $T_0$ given by
\begin{equation}\label{time}
T_0 = \frac{1}{2\theta} \ln \left( \frac{2\xi \kappa }{\mu}\right) +1,
\end{equation}
with
$
	\theta=	\frac{3\alpha\eta}{(1+2\eta L)L^2}, 	\kappa  = 1+\max\left\lbrace{2\eta L, \frac{\sigma}{\xi}}\right\rbrace
$
and $\eta \in \left(0, \frac{\xi-1}{2L(1+2\xi)}\right)$ satisfying 
$$
\frac{2\alpha \eta}{(2+2\eta L)L^2}=\frac{\sigma}{2h(\xi+\sigma)}
$$
where $ \sigma=\xi -1 -2L\eta(1+2\xi)$.  Let $T_{\min}>0$ given by
\begin{equation*}
T_{\min}:= -\frac{1}{\nu} 
\ln\left( \frac{\mu}{2} \right)
+\left(
	\frac{2\lVert b \rVert_\infty}{\nu}+1
\right)T_0, \hbox{ with } \nu=\frac{1}{T_0}\ln\left(\frac{1}{(\mu+\varepsilon)}\right).
\end{equation*}
Then, there exists $\delta>0$, $r>0$, $C>0$ and $\gamma$, depending on $T_{\min}, \xi, L, h$, such that if $\lVert b \rVert_\infty \leq \delta$, then for every
$(u_0,z_0) \in \mathcal{H} = L^2(\Omega)\times L^2(\Omega\times(0,1))$
satisfying $\lVert (u_0, z_0) \rVert_{\mathcal{H}}\leq r$,
the energy of the system~\eqref{eq:KP} satisfies
$$
E_u(t)\leq Ce^{-\gamma t}E_u(0), \hbox{ for all } t>T_{\min}.$$
\end{theorem}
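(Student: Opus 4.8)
The plan is to recast the delay as a transport equation and then run a multiplier/Lyapunov argument whose constants are tuned to be sharp. First I would lift the history by setting $z(x,y,\rho,t)=u(x,y,t-\rho h)$ for $\rho\in(0,1)$, so that $z$ solves $h\,\partial_t z+\partial_\rho z=0$ with $z(x,y,0,t)=u(x,y,t)$; the pair $(u,z)$ then lives in $\mathcal{H}$ and the functional \eqref{eq:KPen} is exactly its natural energy. Local well-posedness of this augmented nonlinear system (which I take from the earlier sections) gives a solution on a maximal interval, and the smallness $\lVert(u_0,z_0)\rVert_{\mathcal{H}}\le r$ is what will let me close the nonlinear estimates globally in time --- this is the source of the word \emph{local} in the statement.

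Next I would derive the dissipation identity. Multiplying the $u$-equation by $u$, integrating over $\Omega$, and using the boundary conditions, the third-order and KP terms are conservative while the fifth-order term yields the favourable boundary contribution $\tfrac{\beta}{2}\int_0^L(\partial_x^2u(0,y))^2\,dy\le0$ (recall $\beta<0$); the nonlinearity integrates to zero since $u$ vanishes on the lateral boundary. Pairing this with $\tfrac{d}{dt}$ of the delay part of \eqref{eq:KPen} and applying Young's inequality to the cross term $\int_\Omega b\,u(t)u(t-h)$ gives
\begin{equation*}
\frac{d}{dt}E_u(t)\le -\int_\Omega\big(a(x,y)-b(x,y)\big)u^2\,dx\,dy+\frac{\beta}{2}\int_0^L\big(\partial_x^2u(0,y)\big)^2\,dy,
\end{equation*}
which is dissipative once $\lVert b\rVert_\infty\le\delta$ is small enough that $a-b$ has the right sign on $\omega$. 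This alone only controls $u$ on $\omega$, so it must be upgraded.

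The upgrade is a Lyapunov functional $V(t)=E_u(t)+\eta\int_\Omega x\,u^2\,dx\,dy+V_{\mathrm{del}}(t)$, equivalent to $E_u$ with the two-sided bound encoded by $\kappa=1+\max\{2\eta L,\sigma/\xi\}$, where $V_{\mathrm{del}}$ is the delay energy carrying an exponential weight in $\rho$. The decisive computation is $\tfrac{d}{dt}\int_\Omega x\,u^2$: the multiplier $x\,u$ against $\alpha\partial_x^3u$ produces, after the boundary terms cancel, the dispersive gain $\tfrac{3\alpha}{2}\int_\Omega(\partial_x u)^2$ (and the fifth-order term contributes a further nonpositive $(\partial_x^2u)^2$-term), and Poincar\'e's inequality on $(0,L)$ converts this into $-2\theta\,E_u$ with exactly $\theta=\tfrac{3\alpha\eta}{(1+2\eta L)L^2}$, the factor $(1+2\eta L)$ being the same one that controls the equivalence of $V$ and $E_u$. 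The delay contributions generated along the way are precisely those absorbed by $V_{\mathrm{del}}$ provided $\eta,\xi,\sigma$ satisfy the stated relations $\sigma=\xi-1-2L\eta(1+2\xi)$ and $\tfrac{2\alpha\eta}{(2+2\eta L)L^2}=\tfrac{\sigma}{2h(\xi+\sigma)}$; saturating this equality is what makes the constant optimal rather than merely admissible. The net outcome is $V'(t)\le -2\theta\,V(t)+R(t)$, where $R$ collects the cubic nonlinear remainder.

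Finally I would close and iterate. The remainder $R$, arising from $\tfrac12\partial_x(u^2)$ tested against $x\,u$, is cubic in $u$; using the anisotropic Gagliardo--Nirenberg inequality of Theorem~\ref{thm:Besov} together with the smallness $\lVert(u_0,z_0)\rVert_{\mathcal{H}}\le r$ (propagated by a continuation argument), it is absorbed into $-2\theta V$, upgrading the inequality to $V'\le-\theta V$ and hence $E_u(T_0)\le\mu\,E_u(0)$ with $T_0$ as in \eqref{time}. Propagating this contraction over successive windows of length $T_0$ and using the time-invariance of the augmented system gives geometric decay at rate $\nu=\tfrac{1}{T_0}\ln\frac{1}{\mu+\varepsilon}$; tracking how the anti-damping of size $\lVert b\rVert_\infty$ perturbs this rate produces the threshold $T_{\min}$, after which $E_u(t)\le Ce^{-\gamma t}E_u(0)$ holds. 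The main obstacle is the third step: carrying out the repeated integrations by parts for the third- and fifth-order operators under the weight $x\,u$ while keeping every boundary term, and simultaneously handling the nonlocal KP term $\gamma\partial_x^{-1}\partial_y^2u$, all \emph{without slack}, since the optimality of $\theta$ forbids absorbing the delay cross-terms by any inequality looser than the tuned Young split fixed by $\sigma$ and $\eta$.
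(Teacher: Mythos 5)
Your overall architecture (transport lift, Lyapunov functional with the weight $x$, absorption of the cubic term via Theorem~\ref{thm:Besov}, contraction over windows of length $T_0$, iteration producing $\nu$ and $T_{\min}$) matches the paper's in outline, but there is a genuine gap at the decisive step: you run the Lyapunov computation directly on the system \eqref{eq:KP}, and that computation does not close. The cross term $\int_\Omega x\,b\,u(t)u(t-h)\,dx\,dy$ produced by the multiplier, after Young's inequality, leaves a positive contribution proportional to $\int_\Omega b\,u^2(t)\,dx\,dy$ (and the delay functional $V_{\mathrm{del}}$ leaves a similar one) that must be dominated by damping; but under Assumption~\ref{A1} the only available damping $a$ is bounded below only on $\omega$, while $b$ is an arbitrary nonnegative $L^\infty$ function that need not be supported there. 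No smallness of $\lVert b\rVert_\infty$ fixes the sign of $a-b$ off $\omega$ --- the paper states explicitly that the energy \eqref{eq:KPen} is \emph{not} decreasing in general. Consequently the relations $\sigma=\xi-1-2L\eta(1+2\xi)$ and $\frac{2\alpha\eta}{(2+2\eta L)L^2}=\frac{\sigma}{2h(\xi+\sigma)}$ that you invoke to ``absorb the delay contributions'' are calibrated for a different, auxiliary system, not for \eqref{eq:KP} itself.

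What the paper actually does is add and subtract the collocated damping $\xi b(x,y)u$ with $\xi>1$: the perturbed system \eqref{eq:P} (with $+\xi b u$ and $f=0$) has a genuinely decreasing energy, since its derivative contains $\frac12\int_\Omega(b-\xi b)\bigl(u^2(t)+u^2(t-h)\bigr)\,dx\,dy\le0$ pointwise, and the Lyapunov functional with your $\theta$, $\kappa$, $\eta$, $\sigma$ applies to \emph{that} system (Proposition~\ref{pr:Pes1}). The original linear dynamics are then recovered through the decomposition $u=v+w$, where $v$ solves the dissipative perturbed system and $w$ solves it with source $\xi b v$; a Duhamel estimate bounds $w$ by a constant times $\lVert b\rVert_\infty$, so the smallness $\lVert b\rVert_\infty\le\delta$ enters to make the correction $w$ small over one window of length $T_0$, not to make $a-b$ nonnegative. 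This add-and-subtract step is the essential missing idea in your proposal; without it the ``decisive computation'' of your third paragraph cannot be carried out. Your treatment of the nonlinearity (absorbing the cubic remainder directly into $-2\theta V$) also differs from the paper's second decomposition $u=u^1+u^2$ with a Duhamel bound on $u^2$, but that difference is benign; the perturbed-system step is not.
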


Now on, following the ideas in \cite{martinez2022}, we obtain some stability properties about the next system, called $\mu_i$--system.  Note that if we choose $a(x,y)= \mu_1 a(x,y)$ and $b(x,y)=\mu_2 a(x,y)$ in~\eqref{eq:KP}, where $\mu_1$ and $\mu_2$ are real constants we obtain the system
\begin{equation}\label{eq:MU}
\begin{cases}
	\begin{aligned} 
		&\partial_tu(x,y,t) + \alpha \partial_{x}^3u(x,y,t) + \beta\partial_{x}^5u(x,y,t) \\ &+ \gamma\partial_x^{-1} \partial^2_yu(x,y,t)+ \frac{1}{2}\partial_x(u^2(x,y,t)) \\
&+ a(x,y)\left(\mu_1u(x,y,t) + \mu_2 u(x,y,t-h)\right)=0,
	\end{aligned}& (x,y,t)\in \Omega\times\mathbb{R}^+\\
	u(0,y,t)=u(L,y,t)=0,& y\in(0,L), \ t\in(0,T), \\
	\partial_x u(L,y,t) = \partial_xu(0,y,t)=\partial^2_xu(L,y,t)=0,&y\in(0,L), \ t\in(0,T), \\
	u(x,L,t)=u(x,0,t)=0,& x\in(0,L), \ t\in(0,T),\\
	u(x,y,0)=u_0(x,y),\quad
	u(x,y,t)= z_0(x,y,t), & (x,y)\in \Omega, \ t\in(-h,0).
\end{cases}
\end{equation}
Here, $\mu_1>\mu_2$ are positive real number and $a(x,y)$ satisfies Assumption \ref{A1}.  We define the total energy associated to~\eqref{eq:MU}
\begin{equation}\label{eq:MUen}
\begin{split}
E_u(t)=&\frac{1}{2}\int_0^L\int_0^L  u^2(x,y,t)\,dx\,dy \\&+\frac{\xi}{2}\int_0^L\int_0^L  \int_0^1 a(x,y) u^2(x,y,t-\rho h)\,d\rho\,dx\,dy,
\end{split}
\end{equation}
where $\xi>0$  satisfies
\begin{equation}\label{eq:MUcond}
h\mu_2<\xi<h(2\mu_1-\mu_2).
\end{equation}
Note that the derivative of the energy \eqref{eq:MUen} satisfies 
	\begin{equation}\label{eq:MUenD}
\begin{aligned}
\frac{d}{dt} E_u(t) \leq&  -C%
	\left(
		\int_0^L \partial^2_xu(0,y,t)^2\,dy+ \int_0^L
			\left(%
				{\partial_x^{-1}} \partial_yu(0,y,t)%
			\right)^2\,dy
	\right. \\
&	\left.
		+\int_0^L\int_0^L  a(x,y)u^2(x,y,t-h)\,dx\,dy%
	\right)
\end{aligned}
\end{equation}
for $C:= C\left(\mu_1,\mu_2,\xi,h\right)\geq 0$. This indicates that the function $a(x,y)$ plays the role of a feedback-damping mechanism, at least for the linearized system. Therefore,  for the system \eqref{eq:MU} we split the behavior of the solutions into two parts.  Employing Lyapunov's method, it can be deduced that the energy $E_u(t)$ goes exponentially to zero as $t \rightarrow \infty$, however, the initial data needs to be sufficiently small in this case.  Precisely,  the second local result, can be read as follows:
\begin{theorem}[Local stabilization]\label{th:MUlyes}
Let $L>0$.  Assume that $a(x,y)\in L^\infty(\Omega)$ is a non-negative function, that  relation \eqref{eq:MUcond} holds and $\beta< -\frac{1}{30}$.
Then, there exists 
\begin{equation*}{
0<r< \frac{\sqrt[4]{216\alpha^3}}{CL^{\frac{5}{2}}}
}
\end{equation*}
such that for every  
$\left(u_0,z_0(\cdot,\cdot,-h(\cdot))\right)\in\mathcal{H}$
satisfying 
$\left\lVert{(u_0,z_0(\cdot,\cdot,-h(\cdot)))}\right\rVert_{\mathcal{H}} \leq r$,
the energy defined in~\eqref{eq:MUen} decays exponentially. More precisely, there exists two positives constants $\theta$ and $\kappa$ such that $E_u(t) \leq \kappa E_u(0)e^{-2\theta t}$ for all $t>0.$ Here,
\begin{equation*}
\theta
 < \min\left\lbrace
		\frac{\eta}{(1+2\eta L)L^2}
			\left[
				3\alpha 
				- \frac{1}{2} C^{\frac{4}{3}} r^{\frac{4}{3}} L^{\frac{10}{3}}
			\right],
			\frac{\xi\sigma}{2h(\xi+\sigma\xi)}
	\right\rbrace, \quad 
	\kappa  = 1+\max\lbrace{2\eta L, \sigma}\rbrace
\end{equation*}
and $\eta$ and $\sigma$ are positive constants such that
\begin{equation*}
\begin{split}
\sigma &
< \frac{2h}{\xi}\left(
		\mu_1-\frac{\mu_2}{2}-\frac{\xi}{2h}
	\right)\\
\eta &
  <\min\left\lbrace
		\frac{1}{2L\mu_2}
			\left[
				\frac{\xi}{h}-\mu_2
			\right],
		\frac{1}{2L\mu_1+L\mu_2}
			\left[
				\mu_1 - \frac{\mu_2}{2} 
				- \frac{\xi}{2h}(1+\sigma)
			\right]
	\right\rbrace.
\end{split}
\end{equation*}
\end{theorem}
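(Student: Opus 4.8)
The plan is to run a Lyapunov argument on the augmented state produced by the standard transport lifting of the delay. First I would set $z(x,y,\rho,t)=u(x,y,t-\rho h)$ for $\rho\in(0,1)$, so that $z$ solves the transport equation $h\partial_t z+\partial_\rho z=0$ with $z(x,y,0,t)=u(x,y,t)$ and $z(x,y,1,t)=u(x,y,t-h)$. On this state I would build a Lyapunov functional $V(t)$ from the energy \eqref{eq:MUen} by replacing the kinetic density $u^2$ with the weighted density $(1+2\eta x)u^2$ and by inserting into the delay term a decreasing $\rho$-profile calibrated by $\sigma$. Since $(1+2\eta x)\in[1,1+2\eta L]$ and the delay profile ranges over a comparable interval, this yields at once the two-sided equivalence $E_u(t)\le V(t)\le \kappa\,E_u(t)$ with $\kappa=1+\max\{2\eta L,\sigma\}$, exactly the constant in the statement; it therefore suffices to prove the differential inequality $\tfrac{d}{dt}V(t)\le-2\theta V(t)$, which after integration and the equivalence gives $E_u(t)\le\kappa E_u(0)e^{-2\theta t}$.

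Next I would differentiate $V$ along the flow, which amounts to testing \eqref{eq:MU} against $(1+2\eta x)u$ and integrating by parts in $x$, and to handling the delay block through an integration by parts in $\rho$. Using the boundary conditions $u(0,y,t)=u(L,y,t)=\partial_x u(0,y,t)=\partial_x u(L,y,t)=\partial_x^2u(L,y,t)=0$, the third-order term $\alpha\partial_x^3u$ produces the pure interior dissipation $-3\alpha\eta\int_0^L\int_0^L u_x^2\,dx\,dy$, with every boundary trace vanishing because both $u$ and $\partial_x u$ vanish at $x=0,L$; the Kawahara term $\beta\partial_x^5u$ produces the interior dissipation $5\beta\eta\int_0^L\int_0^L u_{xx}^2\,dx\,dy$ together with the trace $\tfrac{\beta}{2}\int_0^L\partial_x^2u(0,y,t)^2\,dy$, both negative since $\beta<0$; and the transversal term $\gamma\partial_x^{-1}\partial_y^2u$ yields the trace $\int_0^L(\partial_x^{-1}\partial_y u(0,y,t))^2\,dy$. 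Differentiating the delay block and integrating by parts in $\rho$ returns $-2\theta$ times that block plus the difference of the $\rho=0$ and $\rho=1$ traces, which combine with the damping/anti-damping pair $a(\mu_1u+\mu_2 z|_{\rho=1})$ under \eqref{eq:MUcond} into a controllable balance whose residual is precisely the second entry $\frac{\xi\sigma}{2h(\xi+\sigma\xi)}$ of the minimum defining $\theta$. A Poincaré inequality in $x$ (which introduces the $L^2$ in the denominator) then bounds $-3\alpha\eta\int u_x^2$ by a negative multiple of $\int u^2$, and dividing by the equivalence weight $1+2\eta L$ produces the first entry $\frac{\eta}{(1+2\eta L)L^2}[3\alpha-\cdots]$; the admissible ranges prescribed for $\eta$ and $\sigma$ are exactly those that make every cross term between the kinetic and delay blocks absorbable.

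The only non-quadratic contribution is the nonlinearity, and this is where I expect the main obstacle. With the multiplier $(1+2\eta x)u$ and $u=0$ on $x=0,L$, the term $\tfrac12(1+2\eta x)u\,\partial_x(u^2)$ integrates by parts to the cubic term $\tfrac{2\eta}{3}\int_0^L\int_0^L u^3\,dx\,dy$ (up to sign). I would estimate $\abs{\int\int u^3}$ by a Gagliardo-Nirenberg inequality of the type supplied by Theorem~\ref{thm:Besov}, interpolating $\norm{u}_{L^3}^3$ against $\norm{u}_{L^2}$ and the dissipative norms $\norm{u_x}_{L^2}$, $\norm{u_{xx}}_{L^2}$ (together with the transversal norm $\norm{\partial_x^{-1}\partial_y u}_{L^2}$ arising in the anisotropic KP setting). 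The resulting bound splits into a highest-order piece, absorbed into the fifth-order dissipation $-5\abs{\beta}\eta\int\int u_{xx}^2$ --- and it is exactly this absorption that forces the threshold $\beta<-\tfrac{1}{30}$, the constant being dictated by the Gagliardo-Nirenberg constant --- and a remainder of the form $C\norm{u}_{L^2}^{4/3}\int\int u_x^2$, absorbed into the third-order dissipation $-3\alpha\eta\int\int u_x^2$. Bounding $\norm{u}_{L^2}^{4/3}\le r^{4/3}$ turns the coefficient $3\alpha$ into $3\alpha-\tfrac12 C^{4/3}r^{4/3}L^{10/3}$, and the stated threshold $r<\sqrt[4]{216\alpha^3}/(CL^{5/2})$ is nothing but the inequality $\tfrac12 C^{4/3}r^{4/3}L^{10/3}<3\alpha$ keeping this coefficient, hence the first entry of $\theta$, strictly positive.

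The genuine difficulty is that this estimate is only licensed while $\norm{u(t)}_{L^2}\le r$, a smallness the argument is simultaneously trying to establish; I would close this by a continuity/bootstrap argument. On any maximal interval on which $\norm{u(t)}_{L^2}\le r$ holds, the inequality $\tfrac{d}{dt}V\le-2\theta V$ is valid, so $V(t)\le V(0)e^{-2\theta t}$ and hence $\norm{u(t)}_{L^2}^2\le 2V(t)\le 2\kappa E_u(0)$ is nonincreasing; choosing the initial radius of $(u_0,z_0)$ in $\mathcal H$ small relative to the stated threshold for $r$ ensures the bound is never saturated, so the interval is all of $(0,\infty)$. Consequently $\tfrac{d}{dt}V\le-2\theta V$ holds for every $t>0$, and the equivalence $E_u\le V\le\kappa E_u$ yields $E_u(t)\le\kappa E_u(0)e^{-2\theta t}$ with the constants $\eta,\sigma,\theta,\kappa$ furnished by the construction, as claimed.
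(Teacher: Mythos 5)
Your proposal is correct and follows essentially the same route as the paper: the Lyapunov functional $V=E_u+\eta V_1+\sigma V_2$ with the weight $(1+2\eta x)$ and the $(1-\rho)$ delay profile, the Besov/Gagliardo--Nirenberg splitting of the cubic term into a piece absorbed by $5\beta\eta\int\!\!\int u_{xx}^2$ (whence $\beta<-\tfrac1{30}$) and a remainder absorbed, after Poincar\'e, by $-3\alpha\eta\int\!\!\int u_x^2$ (whence the threshold $r<\sqrt[4]{216\alpha^3}/(CL^{5/2})$), and the equivalence $E_u\le V\le\kappa E_u$. Your closing continuity/bootstrap argument for propagating the smallness $\lVert u(t)\rVert_{L^2}\le r$ is a point the paper leaves implicit (it follows from the monotonicity of $E_u$ in \eqref{eq:MUenD}), and making it explicit is a welcome refinement rather than a deviation.
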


The last result of the manuscript, still related to the system \eqref{eq:MU}, removes the hypothesis of the initial data being small.  To do that, we use the compactness-uniqueness argument due to J.-L. Lions \cite{Lions}, which reduces our problem to prove an \textit{observability inequality} for the nonlinear system \eqref{eq:MU} and removes the hypotheses that the initial data are small enough.

\begin{theorem}[Global stabilization]\label{th:MUes}
Let $a\in L^\infty(\Omega)$ satisfies Assumption \ref{A1}. Suppose that $\mu_1>\mu_2$ satisfies~\eqref{eq:MUcond}. Let $R>0$, then there exists $C=C(R)>0$ and $\nu=\nu(R)>0$ such that $E_u$, defined in~\eqref{eq:MUen} decays exponentially as $t$ tends to infinity,  when $\left\lVert(u_0,z_0)\right\rVert_{\mathcal{H}}\leq R$.
\end{theorem}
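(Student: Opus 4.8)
The plan is to reduce the exponential decay to an \emph{observability inequality} and then to establish that inequality by a compactness--uniqueness argument in the spirit of J.-L. Lions. Concretely, I would first prove that there exist $T>0$ and $C=C(R)>0$ such that every solution of~\eqref{eq:MU} with $\norm{(u_0,z_0)}_{\mathcal H}\le R$ satisfies
\begin{equation}\label{eq:obs}
E_u(0)\le C\int_0^T\!\!\left(\int_0^L\!\big(\partial_x^2u(0,y,t)\big)^2dy+\int_0^L\!\big(\partial_x^{-1}\partial_yu(0,y,t)\big)^2dy+\int_0^L\!\!\int_0^L\! a\,u^2(x,y,t-h)\,dx\,dy\right)dt .
\end{equation}
Granting \eqref{eq:obs}, integrating the dissipation law \eqref{eq:MUenD} on $(0,T)$ gives $E_u(T)\le E_u(0)-C_0\int_0^T(\cdots)\,dt\le (1-C_0/C)E_u(0)$, where $C_0>0$ is the dissipation constant of \eqref{eq:MUenD}; hence $E_u(T)\le \gamma_0 E_u(0)$ with $\gamma_0:=1-C_0/C\in(0,1)$. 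Because \eqref{eq:MUenD} forces $t\mapsto E_u(t)$ to be non-increasing, the size restriction is preserved along the evolution, so the same inequality applies on each interval $(nT,(n+1)T)$; iterating yields $E_u(nT)\le\gamma_0^{\,n}E_u(0)$ and hence $E_u(t)\le \kappa\, e^{-\nu t}E_u(0)$ with $\nu=-T^{-1}\ln\gamma_0$. This reduces everything to \eqref{eq:obs}.

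To prove \eqref{eq:obs} I would argue by contradiction. If it fails for the given $R$, there is a sequence of solutions $u^n$ with $\norm{(u_0^n,z_0^n)}_{\mathcal H}\le R$ and $\lambda_n:=E_{u^n}(0)$ such that the right-hand observation integral divided by $\lambda_n$ tends to $0$. Rescaling $v^n:=u^n/\sqrt{\lambda_n}$ normalizes $E_{v^n}(0)=1$ (the energy being quadratic in both its parts) and turns \eqref{eq:MU} into the same system with the nonlinear coefficient replaced by $\sqrt{\lambda_n}$; after passing to a subsequence, $\lambda_n\to\lambda\ge 0$. The uniform bound $E_{v^n}(0)=1$, together with the multiplier identities behind \eqref{eq:MUenD} and the Kato smoothing effect produced by the fifth-order term (gaining $x$-derivatives locally) and by the $\partial_x^{-1}\partial_y$ trace controlling the transverse variable, would give bounds for $v^n$ in $L^\infty(0,T;L^2(\Omega))\cap L^2(0,T;X^2_{x,\mathrm{loc}})$ and for $\partial_t v^n$ in a negative-order space. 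An Aubin--Lions argument then provides a subsequence converging strongly in $L^2_{\mathrm{loc}}(\Omega\times(0,T))$, which is precisely what is needed to pass to the limit in the quadratic term $\sqrt{\lambda_n}\,\partial_x\big((v^n)^2\big)$.

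The limit $v$ then solves the K-KP-II system (linear when $\lambda=0$, nonlinear with coefficient $\sqrt{\lambda}$ when $\lambda>0$) with the homogeneous boundary conditions of \eqref{eq:MU}, and the vanishing of the observation forces $\partial_x^2v(0,\cdot,\cdot)=0$, $\partial_x^{-1}\partial_yv(0,\cdot,\cdot)=0$, and $a\,v^2(\cdot,\cdot,t-h)=0$, whence $v=0$ almost everywhere on $\omega\times(0,T)$ by Assumption~\ref{A1}. The crux is then a \emph{unique continuation property}: a solution of \eqref{eq:MU} that vanishes on the open set $\omega\times(0,T)$ must vanish identically. I would establish this either by a Carleman estimate adapted to the operator $\partial_t+\alpha\partial_x^3+\beta\partial_x^5+\gamma\partial_x^{-1}\partial_y^2$, or, following the spectral reduction used for KP-II and K-KP-II in \cite{Panthee2011,ailton2021}, by showing that any such $v$ would generate a nontrivial eigenfunction of the associated stationary operator and excluding it. Once $v\equiv0$ is obtained, the strong convergence gives $E_v(0)=\lim_n E_{v^n}(0)=1$, which contradicts $v\equiv0$, and \eqref{eq:obs} follows.

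The main obstacle is precisely this unique continuation step: the nonlocal antiderivative $\partial_x^{-1}\partial_y^2$ together with the fifth-order dispersion makes the symbol analysis delicate, and a clean Holmgren argument is unavailable because $a\in L^\infty$ only. A secondary difficulty is quantitative: the constant in \eqref{eq:obs} must depend only on $R$ through the a priori energy bound, so the smoothing and compactness estimates used to absorb the nonlinear term have to be uniform along the normalized sequence, which is why treating the two cases $\lambda=0$ and $\lambda>0$ separately is convenient.
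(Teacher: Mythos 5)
Your overall strategy coincides with the paper's: reduce the exponential decay to an observability inequality, prove that inequality by contradiction and compactness (Aubin--Lions after the Kato-type smoothing bounds), and conclude with a unique continuation property, which the paper settles by Holmgren's theorem when the limit of the normalizing constants vanishes and by invoking \cite[Theorem 1.2]{ailton2021} otherwise --- one of the two routes you sketch for the ``crux'' step.

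There is, however, one step that does not close as written: the final contradiction. You normalize by $\lambda_n=E_{u^n}(0)$, so that $E_{v^n}(0)=1$, and then assert that the strong convergence furnished by Aubin--Lions gives $E_v(0)=\lim_n E_{v^n}(0)=1$, contradicting $v\equiv 0$. But Aubin--Lions yields strong convergence only in $L^2(0,T;L^2(\Omega))$; it says nothing about convergence of the trace at $t=0$, let alone of the history term $\tfrac{\xi}{2}\int_\Omega\int_0^1 a\,\lvert z^n(\cdot,\cdot,-\rho h)\rvert^2\,d\rho\,dx\,dy$ that is part of $E_{v^n}(0)$. Knowing $v^n\to 0$ in $L^2(0,T;L^2(\Omega))$ is perfectly compatible with $E_{v^n}(0)=1$ for every $n$, unless one first proves a backward estimate of the form $E_{v^n}(0)\le \tfrac{1}{T}\lVert v^n\rVert^2_{L^2(0,T;L^2(\Omega))}+c\,(\text{observation terms})$. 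This is exactly the content of \eqref{eq:MUr3} (multiplier $(T-t)u$) combined with \eqref{eq:MUobs2} for the delay component, and it is the reason the paper normalizes by $\lVert u^n\rVert_{L^2(0,T;L^2(\Omega))}$ and reduces the observability inequality to the space--time estimate \eqref{eq:MUobs3}, whose left-hand side is the quantity that actually converges under Aubin--Lions. Your argument is repaired by inserting these two identities before passing to the limit; without them the contradiction does not follow. A secondary point: your observability inequality omits the term $\int_0^T\!\int_0^L\!\int_0^L a\,u^2(x,y,t)\,dx\,dy\,dt$, which appears both in the full dissipation law \eqref{eq:MUendis} and in the paper's \eqref{eq:MUobs}; since the multiplier identity \eqref{eq:MUobs1} produces this term on the right-hand side, you should keep it among the observed quantities (it is harmless for the decay argument, being likewise absorbed by the dissipation).
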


\subsection{Novelty and outline of the article}
We finish the introduction by highlighting some facts about our problem in comparison with the works previously mentioned, as well as, the organization of the manuscript. 

\begin{itemize}
\item[a.] Observe that the absence of drift term $u_x$, in comparison with Kawahara equation in~\cite{boumediene, martinez2022}, leads to get stabilization results without restriction in the length of the spatial domain. This term is not important in our analysis, the term only plays an important role in the problems where the control (damping or delay) is acting in the boundary condition\footnote{For details about this situation the authors suggest reference \cite{boumediene2}.}. 
\item[b.] As stated earlier, we introduce an anti-damping together with the damping mechanism to show that the energy of the system \eqref{eq:KP} decays exponentially. 
Compared with the known result  \cite{ailton2021}, the novelty of this paper is twofold: 
\begin{enumerate}
\item Our work gives the precise decay rate, see Theorems \ref{th:KPes} and \ref{th:MUlyes}.  
\item Lyapunov's method shows an optimal decay rate in terms of $\theta$ in Theorem \ref{th:KPes}.  Observe that the value of $\theta$ can be optimized as a function of $\eta$, that is, we can choose 
	\begin{equation}\label{alpha}
		\eta \in  \left(0,\frac{\xi-1}{2L(1+2\xi)}\right)
	\end{equation}
	such that the value of $\theta$ is the largest possible,  which implies that the decay rate $\theta$ thus obtained is the best one.  This can be seen defining the functions $ f, g :  \left[0,\frac{\xi-1}{2L(1+2\xi)}\right]  \longrightarrow   \mathbb{R}$ by
	\begin{equation*}
		f(\eta)= \frac{3\alpha\eta }{L^2(1+2\eta L)},
\quad
		g(\eta)= \frac{\xi-1-2L\eta(1+2\xi)}{2h(2\xi-1-2\eta L(1+2\xi))},
	\end{equation*}
and considering  $\gamma(\eta)=\min\{f(\eta),g(\eta)\}$. So,  the function $f$ is increasing in the interval $ \left[0,\frac{\xi-1}{2L(1+2\xi)}\right)$ while the function $g$ is decreasing in this same interval.  In fact, note that
	\begin{equation*}
		f(\eta)= \frac{3\alpha}{2L^3}\left(1-\frac{1}{1+2\eta L}\right)
	\end{equation*}
	and
	\begin{equation*}
		g(\eta)= \frac{1}{2h}-\left(\frac{\xi}{4hL(1+2\xi)}\right)\left(\frac{1}{\frac{\xi}{2L(1+2\xi)}+\frac{\xi-1}{2L(1+2\xi)}-\eta}\right).
	\end{equation*}
	If $-\frac{1}{2L}<\eta$, then
	\begin{equation*}
		f'(\eta)= \frac{3\alpha}{2L^3}\frac{2L}{(1+2L\eta)^2}>0.
	\end{equation*}
	In particular, $f'(\eta)>0$ when $$\eta \in \left[0,\frac{\xi-1}{2L(1+2\xi)}\right).$$ Analogously,
	\begin{equation*}
		g'(\eta)=-\left(\frac{\xi}{4hL(1+2\xi)}\right)\frac{1}{\left(\frac{\xi}{2L(1+2\xi)}+\frac{\xi-1}{2L(1+2\xi)}-\eta\right)^2}<0,
	\end{equation*}
	since $\xi>1$ and $\eta<\frac{\xi-1}{2L(1+2\xi)},$ showing our claim.  
	Now,  we claim that there exists only one point satisfying \eqref{alpha} such that $f(\eta)=g(\eta)$.  To show the existence of this point, it is sufficient to note that $f(0)=0$,  $g\left(\frac{\xi-1}{2L(1+2\xi)}\right)=0$ and 
	\begin{equation*}
		f\left(\frac{\xi-1}{2L(1+2\xi)}\right)=\frac{3\alpha}{2L^3} \left(\frac{3\xi-1}{3\xi}\right)>0, \quad 
		g(0)=\frac{1}{2h}\left(1-\frac{\xi}{2\xi-1}\right)>0.
	\end{equation*}
	The uniqueness follows from the fact that $f$ is increasing while $g$ is decreasing in this interval.
	\end{enumerate}

\item[c.] Taking into account the above information about $f$ and $g$, the maximum value of the function must be reached at the point $\eta$ satisfying \eqref{alpha}, where $f(\eta)=g(\eta)$.  The figure \ref{fig1} below shows, in a simple case,  what was said earlier to the functions $f$ and $g$ when we consider some values, for example, $L=1$, $\xi=2.3$, $\alpha=0.5$ and $h=1.5$:
\begin{figure}[!h]
\centering\includegraphics[width=3.5in]{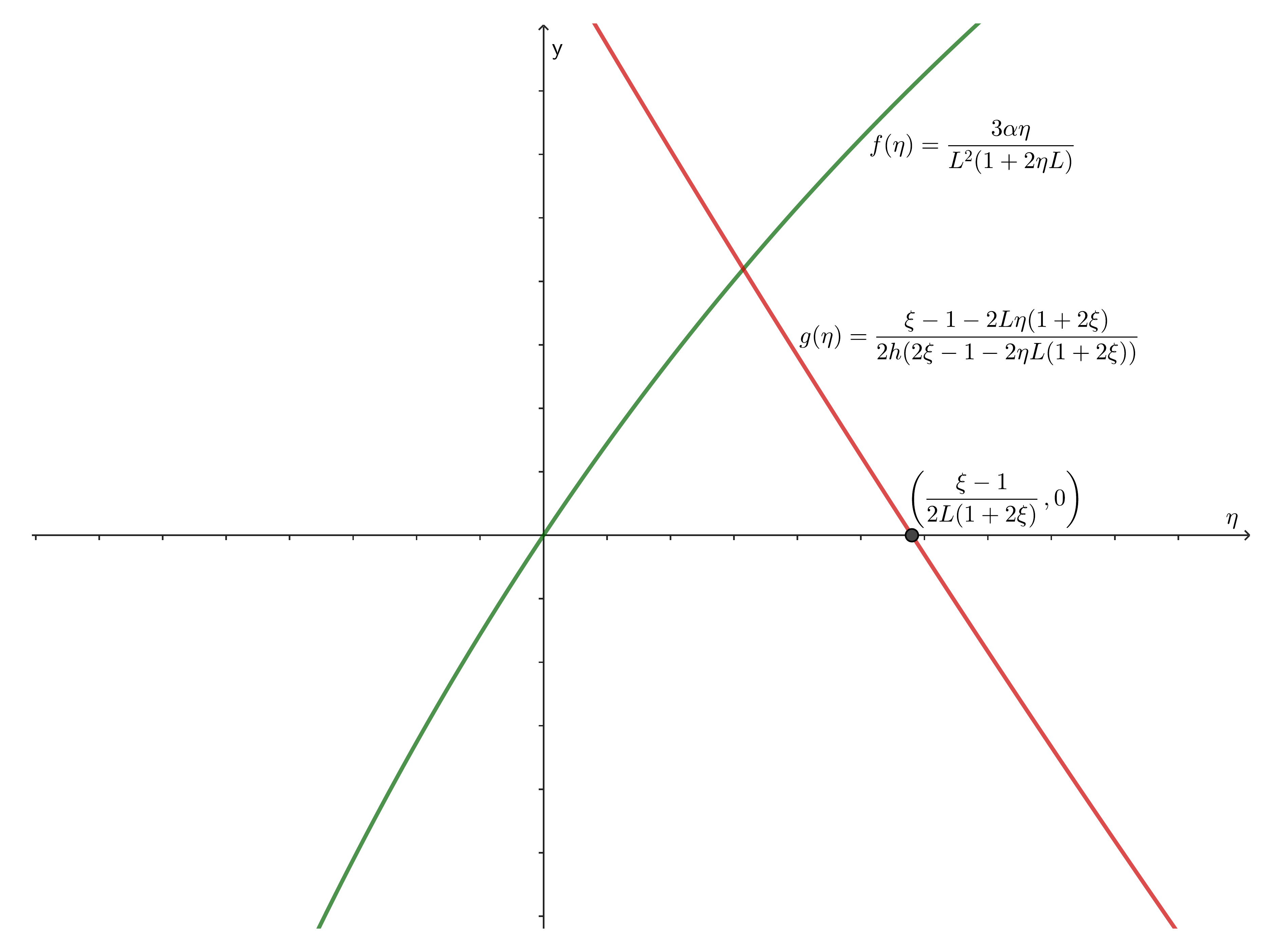}
\caption{Maximum of $\gamma(\eta)=\min\{f(\eta),g(\eta)\}$.}
	\label{fig1}
\end{figure}

	
\item[d.] Still concerning the Theorem \ref{th:KPes},  observe that we do not need to localize the solution of the transport equation in a small subset of $(0,L)$ as in \cite[Section 4]{Valein}. Moreover, we emphasize that we can take $a=0$ in Theorem \ref{th:KPes}. Finally, it is important to mention that we do not know if the time $T_{\min}$ is optimal. 
\item[e.] Aiming to present optimal decay results, note that for the nonlinear system we obtain one stabilization result with no restriction in the length of the spatial domain but carries a restriction in one parameter of the system, see Theorem \ref{th:MUlyes}.  Once again, it is possible to waive one of the conditions (either the restriction on $L$ or a restriction in one parameter of the system).  Observe that, using Theorem~\ref{thm:Besov} like as~\eqref{eq:MUkato2} below, we have
\begin{equation}\label{eq:MUgn}
\begin{aligned}
\int_0^L\int_0^L u^3(x,y,t)\,dx\,dy  
& \leq cL 
\left\lVert{u_{xx}}\right\rVert_{L^2(\Omega)}^{\frac{1}{2}}
\left\lVert{u}\right\rVert_{L^2(\Omega)}^{\frac{5}{2}} \\
& \leq 
{ \frac{1}{4}(CL)^{4}
	\left\lVert
		u_{xx}
	\right\rVert_{L^2(\Omega)}^2
+\frac{3}{4} r^{\frac{4}{3}}
	\left\lVert
		u
	\right\rVert_{L^2(\Omega)}^2.}
\end{aligned}
\end{equation}
This estimate allows obtaining, with an analogous argument another result for exponential stability without restriction in the parameter $\beta$ but with restriction in the length $L$ of the domain.  Thus,  in Theorem \ref{th:MUlyes},  we can remove the hypothesis over $\beta$, however, a hypothesis over $L$ is necessary. The result is the following one:
\begin{theorem}[Local stabilization-\textit{bis}]\label{th:MUlyes1}
Let  $0<L< \sqrt[4]{\frac{-30\beta}{C}}.$ 
Assume that $a(x,y)\in L^\infty(\Omega)$ is a non-negative function and that the relation \eqref{eq:MUcond} holds. 
Then, there exists  $0<r< \frac{\sqrt[4]{216\alpha^3}}{CL^{\frac{5}{2}}}$
such that for every  
$\left(u_0,z_0(\cdot,\cdot,-h(\cdot))\right)\in\mathcal{H}$
satisfying 
$\left\lVert{(u_0,z_0(\cdot,\cdot,-h(\cdot)))}\right\rVert_{\mathcal{H}} \leq r$,
the energy defined in~\eqref{eq:MUen} decays exponentially. More precisely, there exists two positives constants $\theta$ and $\kappa$ such that  $E_u(t) \leq \kappa E_u(0)e^{-2\theta t}$ for all $t>0,$ where $\theta$, 	 $\kappa$,  $\eta$ and $\sigma$ are positive constants defined as in Theorem \ref{th:MUlyes}. 
\end{theorem}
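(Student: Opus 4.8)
The plan is to run the same Lyapunov scheme that proves Theorem \ref{th:MUlyes}, changing only the way the cubic nonlinearity is estimated. Concretely, I would keep the perturbed energy
$$V(t) = E_u(t) + \eta\int_0^L\int_0^L x\,u^2(x,y,t)\,dx\,dy + V_{\mathrm{del}}(t),$$
where $E_u$ is the energy \eqref{eq:MUen}, the multiplier term $\int\int x u^2$ is designed to extract the dispersive smoothing, and $V_{\mathrm{del}}$ is the weighted delay functional (carrying the parameter $\sigma$) obtained from the transport reformulation $z(x,y,\rho,t)=u(x,y,t-\rho h)$, $hz_t+z_\rho=0$, $z(x,y,0,t)=u(x,y,t)$. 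As in Theorem \ref{th:MUlyes} one first checks that $V$ is equivalent to $E_u$, with equivalence constants producing the factor $\kappa=1+\max\{2\eta L,\sigma\}$, so that exponential decay of $V$ transfers to $E_u$. The computation is carried out for smooth solutions and extended to $\mathcal H$-data by density.

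Differentiating $V$ along the flow of \eqref{eq:MU} and integrating by parts, the linear, dispersive and delay contributions are identical to those in Theorem \ref{th:MUlyes}: the multiplier applied to $\alpha u_{xxx}$ produces the good term $-3\alpha\eta\norm{u_x}_{L^2(\Omega)}^2$, the multiplier applied to $\beta u_{xxxxx}$ produces $5\beta\eta\norm{u_{xx}}_{L^2(\Omega)}^2=-5|\beta|\eta\norm{u_{xx}}_{L^2(\Omega)}^2$, and the damping and delay terms together with the balance condition \eqref{eq:MUcond} reproduce the dissipative structure \eqref{eq:MUenD}. The only term treated differently is the cubic one coming from $\tfrac12\partial_x(u^2)$: after the multiplier and one integration by parts (the boundary contribution vanishing by the conditions in \eqref{eq:MU}) it equals $\tfrac{2\eta}{3}\int_0^L\int_0^L u^3\,dx\,dy$, which I would now bound by the interpolation estimate \eqref{eq:MUgn} rather than by the estimate used for Theorem \ref{th:MUlyes}.

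The crux is the absorption step. Inserting \eqref{eq:MUgn} splits the cubic term into $\tfrac{\eta}{6}(CL)^4\norm{u_{xx}}_{L^2(\Omega)}^2+\tfrac{\eta}{2}r^{4/3}\norm{u}_{L^2(\Omega)}^2$. The first summand must be dominated by the good fifth-order contribution $-5|\beta|\eta\norm{u_{xx}}_{L^2(\Omega)}^2$, which is possible precisely when $\tfrac16(CL)^4\le 5|\beta|$, i.e. when $L<\sqrt[4]{-30\beta/C}$ (after renaming the absolute constant $C$) — exactly the hypothesis of the theorem, replacing the restriction $\beta<-\tfrac1{30}$ of Theorem \ref{th:MUlyes}. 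The second, lower-order summand $\tfrac{\eta}{2}r^{4/3}\norm{u}_{L^2(\Omega)}^2$ is absorbed into the third-order good term $-3\alpha\eta\norm{u_x}_{L^2(\Omega)}^2$ via a Poincar\'e inequality in $x$; requiring the resulting effective coefficient $3\alpha-\tfrac12 C^{4/3}r^{4/3}L^{10/3}$ to stay positive is exactly the smallness constraint $r<\tfrac{\sqrt[4]{216\alpha^3}}{CL^{5/2}}$, and this is the coefficient entering $\theta$ as in Theorem \ref{th:MUlyes}.

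Collecting the surviving good terms and the delay dissipation, I would obtain $\tfrac{d}{dt}V(t)\le -2\theta V(t)$ with $\theta,\kappa,\eta,\sigma$ exactly as in Theorem \ref{th:MUlyes}, and a Gronwall argument then yields $E_u(t)\le \kappa E_u(0)e^{-2\theta t}$ for all $t>0$. The main obstacle is the absorption of the critical term $\norm{u_{xx}}_{L^2(\Omega)}^2$: it is the only place where the two hypotheses ($\beta<-\tfrac1{30}$ versus the bound on $L$) genuinely differ, and one must verify that replacing the interpolation estimate leaves the dispersive, damping and delay terms — hence the structural constants $\theta,\kappa,\eta,\sigma$ — untouched, so that the conclusion of Theorem \ref{th:MUlyes} carries over verbatim.
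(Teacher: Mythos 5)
Your proposal is correct and follows essentially the same route as the paper: the paper proves Theorem \ref{th:MUlyes1} exactly by rerunning the Lyapunov computation of Theorem \ref{th:MUlyes} and replacing the cubic estimate by \eqref{eq:MUgn}, so that the factor $(CL)^4$ lands on $\left\lVert u_{xx}\right\rVert_{L^2(\Omega)}^2$ (absorbed by $5\beta\eta\left\lVert u_{xx}\right\rVert_{L^2(\Omega)}^2$ under the hypothesis $L<\sqrt[4]{-30\beta/C}$) instead of on the lower-order term (which forced $\beta<-1/30$). The identification of the absorption step as the only place where the two hypotheses diverge, and the verification that the delay and damping structure is untouched, is precisely the paper's intended "analogous argument."
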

\item [f.]  The results obtained here can be easily adapted for the KP-II system \eqref{eq:KP0a} with or without the drift term $u_x$, extending the results of \cite{ailton2021} and \cite{Panthee2011}.
\end{itemize}

The work is organized as follows: 

-- Section \ref{Sec1} is devoted to proving the first, and optimal,  local stability result, that is, Theorem \ref{th:KPes}.  

-- In Section \ref{Sec2} we are able to prove the exponential stability, Theorem \ref{th:MUlyes},  for the energy associated with the $\mu_i$--system \eqref{eq:MU}.  

-- Additionally, to extend the local property to the global one,  in Section \ref{Sec2} we give the proof of Theorem \ref{th:MUes}. 

-- For the sake of completeness, we present in the Appendix, at the end of the work,  the well-posedness of the time-delayed K-KP-II system. 

\section{The damping-delayed system: Optimal local result}\label{Sec1}
This section deals with the behavior of the solutions associated with \eqref{eq:KP}. The first result ensures local stability considering the perturbed system.  After that, we are in a position to prove the first main result of the article, Theorem \ref{th:KPes}.

\subsection{Preliminaries} 
We are interested in analyzing the well-posedness of~\eqref{eq:KP} with total energy associated defined by \eqref{eq:KPen} that satisfies
\begin{equation}\label{eq:KPen6*}
\begin{aligned}
\frac{d}{dt} E_u(t) \leq & \int_0^L\int_0^L  b(x,y) u^2\,dx\,dy 
		+\frac{\beta}{2}\int_0^L u_{xx}^2(0,y,t)\,dy \\
&-\frac{\gamma}{2} \int_0^L 
		\left(%
			{\partial_x^{-1}} u_y(0,y,t)
		\right)^2\,dy		-\int_0^L\int_0^L  a(x,y) u^2(x,y,t)\,dx\,dy.
\end{aligned}
\end{equation} 
Which implies that the energy is not decreasing, in general, since the term $b(x,y)\geq0$. So, we consider the following perturbation system
\begin{equation}\label{eq:P}
\begin{cases}
	\begin{aligned} 
	&	\partial_tu(x,y,t) + \alpha\partial^3_x u(x,y,t) + \beta\partial^5_x u(x,y,t) \\&+\gamma\partial_x^{-1} \partial^2_yu(x,y,t)+ a(x,y)u(x,y,t) \\&+ b(x,y)(\xi u(x,y,t) + u(x,y,t-h))=f,
	\end{aligned} & (x,y)\in \Omega,\ t>0. \\
u(0,y,t)=u(L,y,t)=0,& y\in(0,L), \ t\in(0,T), \\
	\partial_x u(L,y,t) = \partial_xu(0,y,t)=\partial^2_xu(L,y,t)=0,&y\in(0,L), \ t\in(0,T), \\
	u(x,L,t)=u(x,0,t)=0,& x\in(0,L), \ t\in(0,T),\\
	u(x,y,0)=u_0(x,y),\quad
	u(x,y,t)= z_0(x,y,t), & (x,y)\in \Omega, \ t\in(-h,0),
\end{cases}
\end{equation}
with $f=-\frac{1}{2}\partial_x(u^2(x,y,t)$,  which is ``close'' to \eqref{eq:KP},  where $\xi$ a positive constant, and now the following energy associated with the perturbed system
\begin{equation}\label{eq:Pen}
\begin{split}
E_u(t) = &\frac{1}{2}\int_0^L\int_0^L u^2(x,y,t)\,dx\,dy\\&+\frac{\xi h}{2}\int_0^L\int_0^L\int_0^1 b(x,y) u^2(x,y,t-\rho h)\,d\rho\,dx\,dy,
\end{split}
\end{equation}
is decreasing. In fact,  note that 
\begin{equation*}
\begin{split}
&\frac{d}{dt} E_u(t)\leq \frac{\beta}{2}\int_0^L u_{xx}^2(0,y,t)\,dy
-\frac{\gamma}{2} \int_0^L 
	\left(%
	\partial_x^{-1} u_y(0,y,t)
	\right)^2\,dy\\
&-\int_0^L\int_0^L a(x,y) u^2(x,y,t)\,dx\,dy+\frac{1}{2}\int_0^L\int_0^L (b(x,y)-\xi b(x,y)) u^2(x,y,t) \,dx\,dy\\
&+\frac{1}{2}\int_0^L\int_0^L (b(x,y)-\xi b(x,y)) u^2(x,y,t-h)\,dx\,dy\leq 0,
\end{split}
\end{equation*}
for $\xi>1$.  
Note that the system \eqref{eq:P} can be written as a first-order system
\begin{equation}\label{eq:KPlinabs}
\begin{cases}
\dfrac{\partial}{\partial t}U(t)= AU(t), \\
U(0)=\left(u_0(x,y), z_0(x,y,-\rho h)\right).
\end{cases}
\end{equation}
Here $A = A_0+B$ with domain $D(A)=D(A_0)$,  $A_0$ is defined by 
$$
A_0(u, z) = 
	\left(%
		(-\alpha\partial_x^3 -\beta \partial_x^5-\gamma {\partial_x^{-1}} \partial^2_y-a(x,y))u-b(x,y)(\xi u+z(\cdot,\cdot, 1)),-h^{-1}\partial_{\rho}z
	\right)
$$
and the bounded operator $B$ is defined by $B(u,z) = \left(\xi b(x,y)u, 0\right),$ for all $(u,z)\in \mathcal{H}$.  Observe that system \eqref{eq:KPlinabs} has a classical solution (see Proposition \ref{pr:MUdis}).  

Consider $(e^{A_0t})_{t\geq0}$ the $C_0$--semigroup associated with $A_0$.  First, let us prove the exponential stability of the system~\eqref{eq:P}, with $f=0$, by using Lyapunov's approach. To do that, let us consider the following Lyapunov's functional 
\begin{equation*}
V(t) = E_u(t) + \eta V_1(t) + \sigma V_2(t),
\end{equation*}
where $\eta$ and $\sigma$ are suitable constants to be fixed later,
$E_u(t)$ is the energy defined by~\eqref{eq:Pen},
$V_1(t)$ is giving by
\begin{equation}\label{eq:MUv1}
V_1(t) = \int_0^L\int_0^L xu^2(x,y,t)\,dx\,dy 
\end{equation}
and $V_2(t)$ is defined by
\begin{equation}\label{eq:Pv2}
V_2(t) = \frac{h}{2} 
	\int_0^L\int_0^L \int_0^1
		(1-\rho) b(x,y) u^2(x,y,t-\rho h)\, d\rho\,dx\,dy.
\end{equation}
Note that $E_u(t)$ and $V(t)$ are equivalent in the following sense
\begin{equation}\label{eq:Pv2aa}
E(t) \leq V(t)
\leq
\left(
	1+\max\left\lbrace
		2\eta L, \frac{\sigma}{\xi}
		\right\rbrace
\right)E(t)
\end{equation}
Then, we have the next results for exponential stability to the system \eqref{eq:P} with $f=0$.
\begin{proposition}\label{pr:Pes1}
Let $L>0$.  Assume that $a(x,y)$ and $b(x,y)$ belonging to $L^\infty(\Omega)$ are nonnegative functions, $b(x,y) \geq b_0 >0$ in $\omega$ and $\xi>1$. Then for every $\left(u_0,z_0(\cdot,\cdot,-h(\cdot))\right)\in\mathcal{H}$
the energy defined in \eqref{eq:Pen} decays exponentially. More precisely, there exists two positives constants $\theta$ and $\kappa$ such that $E_u(t) \leq \kappa E_u(0)e^{-2\theta t}$ for all $t>0.$
Here,
\begin{equation*}
\theta
 < \min\left\lbrace
		\frac{3\alpha\eta}{(1+2\eta L)L^2},
			\frac{\sigma}{2h(\xi+\sigma)}
	\right\rbrace, \quad 
	\kappa  = 1+\max\left\lbrace{2\eta L, \frac{\sigma}{\xi}}\right\rbrace
\end{equation*}
and $\eta$ and $\sigma$ are positive constants such that $\sigma  = \xi -1 -2L\eta(1+2\xi)$ and $\eta  < \frac{\xi-1}{2L(1+2\xi)}.$
\end{proposition}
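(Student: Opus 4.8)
The plan is to run Lyapunov's method on the functional $V(t)=E_u(t)+\eta V_1(t)+\sigma V_2(t)$ already introduced, the whole point being to establish the differential inequality $\frac{d}{dt}V(t)\le-2\theta V(t)$ for the stated value of $\theta$. Once this is in hand, Gronwall's lemma gives $V(t)\le V(0)e^{-2\theta t}$, and the equivalence \eqref{eq:Pv2aa} turns this into $E_u(t)\le V(t)\le V(0)e^{-2\theta t}\le\kappa E_u(0)e^{-2\theta t}$, which is exactly the claim with $\kappa=1+\max\{2\eta L,\sigma/\xi\}$. Because \eqref{eq:P} with $f=0$ possesses classical solutions (Proposition~\ref{pr:MUdis}), all of the integrations by parts below are legitimate, and by density it is enough to argue for smooth data.

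First I would differentiate the three ingredients along the flow of \eqref{eq:P}. For $E_u$ I would use the computation already recorded, keeping the dissipative boundary terms $\frac{\beta}{2}\int_0^L u_{xx}(0,y,t)^2\,dy\le0$ and $-\frac{\gamma}{2}\int_0^L(\partial_x^{-1}u_y(0,y,t))^2\,dy\le0$, the damping term $-\int_\Omega au^2$, and the delay contribution $\frac{1-\xi}{2}\int_\Omega bu^2(t)+\frac{1-\xi}{2}\int_\Omega bu^2(t-h)$ obtained after a Young inequality on the mixed term $-\int_\Omega buu(t-h)$. For $V_1$ I would multiply the equation by $2xu$ and integrate; the key point is that the weight $x$ kills every boundary contribution at $x=0$ while the conditions $u(L,\cdot)=u_x(0,\cdot)=u_x(L,\cdot)=u_{xx}(L,\cdot)=0$ kill those at $x=L$, so repeated integration by parts collapses the dispersive part to the clean identities $-2\alpha\int_\Omega xuu_{xxx}=-3\alpha\int_\Omega u_x^2$, $-2\beta\int_\Omega xuu_{xxxxx}=5\beta\int_\Omega u_{xx}^2$, and, writing $w=\partial_x^{-1}u$, $-2\gamma\int_\Omega xuw_{yy}=-\gamma\int_\Omega(\partial_x^{-1}u_y)^2$, all three nonpositive since $\alpha>0$, $\beta<0$, $\gamma>0$; the zero-order part produces $-2\eta\xi\int_\Omega xbu^2-2\eta\int_\Omega xbuu(t-h)$. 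Finally, for $V_2$ I would exploit $\partial_t u^2(t-\rho h)=-h^{-1}\partial_\rho u^2(t-\rho h)$ and integrate by parts in $\rho$, producing $\frac{\sigma}{2}\int_\Omega bu^2(t)-\frac{\sigma}{2}\int_\Omega b\int_0^1 u^2(t-\rho h)\,d\rho$.

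The next step is the bookkeeping that renders all $b$-weighted terms harmless. I would discard the genuinely nonpositive dispersive, boundary and damping terms, and bound the $V_1$ delay contribution by Young together with $0\le x\le L$ and $\xi>1$, as $-2\eta\xi\int_\Omega xbu^2-2\eta\int_\Omega xbuu(t-h)\le\eta(1-2\xi)\int_\Omega xbu^2+\eta\int_\Omega xbu^2(t-h)\le\eta L\int_\Omega bu^2(t-h)$. Collecting the coefficients of $\int_\Omega bu^2(t)$ and $\int_\Omega bu^2(t-h)$ and imposing $\sigma=\xi-1-2L\eta(1+2\xi)$ together with $\eta<\frac{\xi-1}{2L(1+2\xi)}$ forces both coefficients to be nonpositive, so these terms may be dropped as well. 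What remains is $\frac{d}{dt}V(t)\le-3\alpha\eta\int_\Omega u_x^2-\frac{\sigma}{2}\int_\Omega b\int_0^1 u^2(t-\rho h)\,d\rho$.

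It then remains to convert this into $-2\theta V$. I would apply the Poincaré inequality $\int_\Omega u^2\le L^2\int_\Omega u_x^2$ (licit because $u$ vanishes on $x=0,L$) to get $-3\alpha\eta\int_\Omega u_x^2\le-\frac{3\alpha\eta}{L^2}\int_\Omega u^2$, and then compare against the upper bound for $V$ furnished by \eqref{eq:Pv2aa}, whose $L^2$-part is at most $\frac{1+2\eta L}{2}\int_\Omega u^2$ and whose delayed part is at most $\frac{h(\xi+\sigma)}{2}\int_\Omega\int_0^1 bu^2(t-\rho h)\,d\rho$. Requiring $\frac{3\alpha\eta}{L^2}\ge\theta(1+2\eta L)$ and $\frac{\sigma}{2}\ge\theta h(\xi+\sigma)$, that is $\theta\le\min\{\frac{3\alpha\eta}{L^2(1+2\eta L)},\frac{\sigma}{2h(\xi+\sigma)}\}$, yields $\frac{d}{dt}V\le-2\theta V$ and closes the proof. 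I expect the main obstacle to be precisely this third step: the constraints on $\eta$ and $\sigma$ must be tuned so that every $b$-weighted term, above all the mixed delay term $buu(t-h)$, is absorbed while still leaving enough dispersive dissipation to realise both competing rates in the minimum; by contrast the dispersive integrations by parts, though lengthy, are routine once one sees that the weight $x$ and the boundary conditions annihilate all boundary terms.
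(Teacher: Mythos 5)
Your proposal is correct and follows essentially the same route as the paper: the authors likewise differentiate the Lyapunov functional $V=E_u+\eta V_1+\sigma V_2$ along the linear flow, obtain exactly the identity for $\tfrac{d}{dt}V_1$ that you derive, absorb the $b$-weighted and delayed terms using the stated choices of $\sigma$ and $\eta$, and conclude via $\tfrac{d}{dt}V+2\theta V\le 0$ together with the equivalence \eqref{eq:Pv2aa}. In fact the paper records only the $V_1$ computation and asserts the rest, so your write-up supplies the bookkeeping (Young's inequality on the mixed term, the sign of each coefficient, and the Poincar\'e step fixing $\theta$) that the published proof leaves implicit.
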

\begin{proof}
Consider $(u_0,z_0(\cdot,\cdot, - h(\cdot))) \in D(A_0)$. Let $u$ solution of the linear system associated with \eqref{eq:P}.  Differentiating \eqref{eq:MUv1} and using~\eqref{eq:P}$_1$, we obtain
\begin{equation*}
\begin{split}
\frac{d}{dt} V_1(t) 
= &-3\alpha 
	\int_0^L\int_0^L u_x^2(x,y,t)\,dx\,dy 
+5\beta
	\int_0^L\int_0^L u_{xx}^2(x,y,t)\,dx\,dy
\\&-\gamma
	\int_0^L\int_0^L
		\left({\partial_x^{-1}} u_y\right)^2\,dx\,dy-2\int_0^L\int_0^L xa(x,y) u^2(x,y,t)\,dx\,dy 
\\&-2
	\int_0^L\int_0^L x\xi b(x,y) u^2(x,y,t)\,dx\,dy \\&-2	\int_0^L\int_0^L x b(x,y) u(x,y,t) u(x,y,t-h)\,dx\,dy.
	\end{split}
\end{equation*}
Therefore, for $\theta>0$,  $\eta$ and $\sigma$ chosen as in the statement of proposition we have $\frac{d}{dt} V(t) + 2 \theta V(t) \leq 0,$ which is equivalent to
$$
E_u(t) \leq 
\left(
	1+\max\left\lbrace 2\eta L, \frac{\sigma}{\xi} \right\rbrace
\right) e^{-2\theta t} E(0),\quad \forall t>0,
$$
thanks to \eqref{eq:Pv2aa}.
\end{proof}

The next result shows that the energy \eqref{eq:KPen} associated with the system \eqref{eq:P} with appropriate source term $f$ decays exponentially. 
\begin{proposition}\label{pr:KPde}
Consider $a(x,y)$ and $b(x,y) \in L^\infty(\Omega)$ nonnegative functions, $b(x,y) \geq b_0 >0$ in $\omega$ and $\xi>1$.  So, there exists $\delta > 0$ such that if $\lVert \beta \rVert \leq \delta$ then, for every initial data $(u_0, z_0(\cdot,\cdot,-h(\cdot)) \in \mathcal{H}$ the energy of the system $E_u(t)$, defined in~\eqref{eq:KP} is exponentially stable.
\end{proposition}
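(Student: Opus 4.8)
The plan is to run the very same Lyapunov functional as in Proposition~\ref{pr:Pes1}, but now differentiated along the full perturbed system~\eqref{eq:P} with the source $f=-\tfrac12\partial_x(u^2)$, and to absorb the single new (nonlinear) term into dissipation that is already present but was discarded in the linear computation. The feature that lets the argument run for \emph{every} initial datum, rather than only on a small ball, is the conservative character of the quadratic term noted right after~\eqref{eq:Pen}: since $\int_0^L u\,\partial_x(u^2)\,dx=0$ by virtue of $u(0,y,t)=u(L,y,t)=0$, the nonlinearity drops out of the plain $L^2$ balance, so the perturbed energy~\eqref{eq:Pen} is non-increasing for all data and yields the uniform a priori bound $\|u(t)\|_{L^2(\Omega)}^2\le 2E_u(0)$ for every $t\ge 0$. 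No fixed-point contraction and no smallness of $(u_0,z_0)$ is invoked at this stage.

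First I would keep $V(t)=E_u(t)+\eta V_1(t)+\sigma V_2(t)$ with $V_1,V_2$ from~\eqref{eq:MUv1}--\eqref{eq:Pv2} and the admissible constants $\eta,\sigma$ fixed exactly as in Proposition~\ref{pr:Pes1}. Differentiating $V_1=\int_\Omega x\,u^2$ along~\eqref{eq:P} reproduces all the terms treated in the linear proof plus one extra contribution from $f$; integrating by parts once in $x$ and using that the boundary term vanishes on the lateral walls, this contribution is the cubic term $\tfrac{2\eta}{3}\int_\Omega u^3\,dx\,dy$. Hence the linear analysis already furnishes an inequality of the form $\tfrac{d}{dt}V+2\theta V\le 5\beta\eta\int_\Omega u_{xx}^2\,dx\,dy+\tfrac{2\eta}{3}\int_\Omega u^3\,dx\,dy$, in which I deliberately retain the dissipative term $5\beta\eta\int_\Omega u_{xx}^2\le 0$ (recall $\beta<0$) that Proposition~\ref{pr:Pes1} could afford to throw away.

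The heart of the matter is the estimate of the cubic term, carried out through Theorem~\ref{thm:Besov} exactly as in~\eqref{eq:MUgn}: I bound $\int_\Omega u^3\le cL\,\|u_{xx}\|_{L^2(\Omega)}^{1/2}\|u\|_{L^2(\Omega)}^{5/2}$ and split by Young's inequality, so that the top-order factor $\|u_{xx}\|_{L^2(\Omega)}^{1/2}$ is absorbed by the retained fifth-order dissipation $5\beta\eta\|u_{xx}\|_{L^2(\Omega)}^2$, at the cost of a constant that fixes the threshold $\delta$ on the delay weight $\|b\|_{L^\infty(\Omega)}$. What remains is a term $C_\ast\|u\|_{L^2(\Omega)}^{10/3}=C_\ast\|u\|_{L^2(\Omega)}^{4/3}\,\|u\|_{L^2(\Omega)}^2$; inserting the uniform a priori bound $\|u\|_{L^2(\Omega)}^2\le 2E_u(0)$ converts it into $C_\ast(2E_u(0))^{2/3}\|u\|_{L^2(\Omega)}^2$, which is in turn controlled by $V$ through the equivalence~\eqref{eq:Pv2aa}. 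Collecting terms produces a closed differential inequality $\tfrac{d}{dt}V+2\gamma V\le 0$ for a suitable $\gamma>0$, and Gr\"onwall together with~\eqref{eq:Pv2aa} gives $E_u(t)\le \kappa E_u(0)e^{-2\gamma t}$, which is the asserted exponential stability.

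I expect the main obstacle to be precisely the closure of this last inequality with a strictly positive effective rate: the surviving nonlinear contribution is superlinear in the energy, so positivity of $\gamma$ hinges on the gain from the fifth-order dissipation dominating the Besov constant $C_\ast$. This is where the smallness threshold $\delta$ for $\|b\|_{L^\infty(\Omega)}$ is tuned and where the careful balance of $\eta,\sigma,\theta$ matters: the conservative structure of the nonlinearity furnishes the a priori bound for arbitrary data, while the smallness encoded in $\delta$ is what keeps $2\theta$ from being eaten up by the nonlinear remainder. A secondary technical point is to justify the differentiations on $D(A_0)$ and then pass to general data in $\mathcal H$ by density and by the well-posedness recorded in~\eqref{eq:KPlinabs} and Proposition~\ref{pr:MUdis}.
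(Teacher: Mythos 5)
There is a genuine gap, and it sits exactly where you flagged your ``main obstacle'': the closure of the differential inequality with a strictly positive rate fails for large data, and the threshold $\delta$ cannot repair it. After your Young splitting of the Besov bound, the surviving term is
\begin{equation*}
C_\ast\left\lVert u\right\rVert_{L^2(\Omega)}^{\frac{4}{3}}\left\lVert u\right\rVert_{L^2(\Omega)}^{2}
\leq C_\ast\left(2E_u(0)\right)^{\frac{2}{3}}\left\lVert u\right\rVert_{L^2(\Omega)}^{2},
\end{equation*}
and to obtain $\frac{d}{dt}V+2\gamma V\leq 0$ this coefficient must be dominated by the dissipation $-3\alpha\eta\int_\Omega u_x^2$ via Poincar\'e --- compare the displayed inequality preceding \eqref{Vn}, where the same quantity appears with $r^{4/3}$ in place of $(2E_u(0))^{2/3}$ and forces the smallness radius $r$ in Theorem~\ref{th:MUlyes}. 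Since $E_u(0)$ is arbitrary in the proposition, $C_\ast(2E_u(0))^{2/3}$ is arbitrarily large, and the effective rate $\gamma$ becomes nonpositive. Crucially, $\delta$ constrains only $\lVert b\rVert_\infty$, while neither the Besov constant $C_\ast$ nor $E_u(0)$ has anything to do with $b$; no tuning of $\delta$ touches the nonlinear remainder. Your argument is, in substance, the local Lyapunov proof of Theorem~\ref{th:MUlyes}, and it can only yield a local result (or a rate degenerating with the size of the data, as in Theorem~\ref{th:MUes}) --- not the ``for every initial data in $\mathcal H$'' claim of Proposition~\ref{pr:KPde}.

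The underlying misreading is that the proposition is a statement about the \emph{linear} system associated with \eqref{eq:KP}: the paper's proof never meets a cubic term. It decomposes $u=v+w$, where $v$ solves the perturbed system \eqref{eq:P} with $f=0$ --- exponentially stable for \emph{all} data by Proposition~\ref{pr:Pes1}, precisely because that system is linear --- and $w$ solves the system with source $f=\xi b(x,y)v$ and zero initial data, so that $u$ solves the linear system associated with \eqref{eq:KP}. A semigroup estimate gives $\lVert w(T_0)\rVert\lesssim \xi\lVert b\rVert_\infty e^{cT_0}E_v(0)^{1/2}$; choosing $T_0$ from the decay of $v$ and then $\lVert b\rVert_\infty\leq\delta$ yields $E_u(T_0)\leq(\mu+\varepsilon)E_u(0)$ with $\mu+\varepsilon<1$, and iteration over intervals of length $T_0$ concludes. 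In other words, the mechanism is a bounded-perturbation-plus-iteration argument in which smallness of $\lVert b\rVert_\infty$ compensates the anti-damping, and linearity is what legitimizes arbitrary data; the nonlinearity is only reinstated later, in Theorem~\ref{th:KPes}, at the price of the smallness radius $r$. Your conservative-structure observation ($\int_0^L u\,\partial_x(u^2)\,dx=0$, hence $\lVert u(t)\rVert^2\leq 2E_u(0)$) is correct but insufficient: it bounds the trajectory, not the decay rate.
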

\begin{proof}
Consider a function $v$ satisfying the system \eqref{eq:P} with $f=0$,  initial condition $v(x,y,0)=u_0(x,y)$, and $z^1(1)=u(x,y,t-h)$ where $z^1$ satisfies
\begin{equation}\label{eq:KPlinv}
\begin{split}
\begin{cases}
	hz_t^1(x,y,\rho,t) + z_\rho^1(x,y,\rho,t) = 0, & (x,y)\in \Omega,\ \rho\in (0,1),\ t>0\\
	z^1(x,y,0,t) = v(x,y,t), & (x,y)\in \Omega, t>0 \\
	z^1(x,y,\rho,0) = v(x,y,-\rho h)= z_0(x,y,-\rho h), & (x,y)\in\Omega,\ \rho\in(0,1).
\end{cases}
\end{split}
\end{equation}
and $w$ satisfying the source system associated with \eqref{eq:P} with $f=\xi b(x,y)v(x,y,t)$, initial condition $w(x,y,0)=0$ and $z^2(1)=u(x,y,t-h)$ where  $z^2$ satisfies
\begin{equation}\label{eq:KPlinw}
\begin{cases}
	hz_t^2(x,y,\rho,t)+z_\rho^2(x,y,\rho,t) = 0, & (x,y)\in \Omega,\ \rho \in (0,1),\ t>1 \\
	z^2(x,y,0,t) = w(x,y,t), & (x,y)\in \Omega,\ t>0 \\
	z^2(x,y,\rho, 0) = 0, & (x,y)\in \Omega,\ \rho \in(0,1) 
\end{cases}
\end{equation}
Define $u = v + w$ and $ z = z^1 + z^2$,  then $u$ satisfies the linear system associated with \eqref{eq:KP} where $z(1)=u(x,y,t-h)$with $z$ satisfying the equation \eqref{eq:MUtp}.

Now, fix $0<\mu <1$ and choose
\begin{equation*}
T_0 = \frac{1}{2\theta} \ln \left( \frac{2\xi \kappa }{\mu}\right) +1 \implies \kappa e^{-2\theta T_0} < \frac{\mu}{2\xi},
\end{equation*}
where $\eta, \sigma, \theta$ and $\kappa$ are given in the Proposition~\ref{pr:Pes1}. 
As $E_v(0) \leq \xi E_u(0)$, follows that
\begin{equation*}
E_v (T_0) \leq \kappa e^{-2\theta T_0} E_v(0) 
\leq \frac{\mu}{2\xi} E_v(0) \leq \frac{\mu}{2} E_u(0).
\end{equation*}
Observe that
\begin{equation*}
E_u(T_0)  \leq 2E_v(T_0) +
\left\lVert
\left(w(\cdot,\cdot, T_0), w(\cdot,\cdot, T_0-h(\cdot))\right)
\right\rVert_{\mathcal{H}}.
\end{equation*}
Since $A$ generates a $C_0$ semi-group we have that
\begin{equation*}
\begin{aligned}
\left\lVert
\left(w(\cdot,\cdot, T_0), w(\cdot,\cdot, T_0-h(\cdot))\right)
\right\rVert_{\mathcal{H}} 
& \leq 
\int_0^{T_0} e^{\frac{1+3\xi}{2}(T_0-s)}
	\left(
		\int_0^L 
			\left\lvert
				\xi b(x,y) v
			\right\rvert^2\,dx
	\right)^{\frac{1}{2}}\,ds \\
& \leq 
\sqrt{2\kappa}\xi\lVert b \rVert_{\infty} E_v(0)^{\frac{1}{2}}
	\int_0^{T_0}
		e^{\frac{1+3\xi}{2}(T_0-s)} e^{-\theta s}\,ds\\
&\leq 2\xi^2 \lVert b \rVert_{\infty}^2 e^{(3\xi+1)T_0}\kappa E_v(0),
\end{aligned}
\end{equation*}
thanks to the fact that
\begin{equation*}
\int_0^{T_0} e^{\frac{1+3\xi}{2}(T_0-s)}e^{-\theta s}\,ds
 = \frac{e^{\frac{1+3\xi}{2}T_0}-e^{-\theta T_0}}{\frac{1+3\xi}{2}+\theta}
\quad \text{and}\quad \frac{1+3\xi}{2}+\theta > 2.
\end{equation*}
For $\varepsilon>0$ such that $0 < \mu +\varepsilon < 1$ and
$
\left\lVert b \right\rVert_\infty
\leq \min \left\lbrace
	\frac{\sqrt{\varepsilon}}{\sqrt{\varepsilon^3 \kappa}
		e^{\frac{1+3\xi}{2}
			\left(
				\frac{1}{2\theta}\ln\left(\frac{2\xi\kappa}{\mu}\right)
				+2
			\right)}
		} , 1
\right\rbrace,
$
we obtain that,
\begin{equation*}
E_u(T_0) \leq \mu E_u(0) + 2\xi^3 \lVert b \rVert_{\infty}^2 
e^{(1+3\xi)T_0}\kappa E_u(0) < (\mu + \varepsilon)E_u(0).
\end{equation*}
 
 Finally, considering a boot-strap and induction arguments, for $T_0$ defined by \eqref{time}, we can construct another solution that satisfies the linear system associated with \eqref{eq:P} such that the following inequality holds
$
E_u(mT_0) \leq (\mu + \varepsilon)^m E_u(0),
$
for all $m\in \mathbb{N}$.  Picking $t>T_0$,  we note that there exists $m\in \mathbb{N}$ such that $t=mT_0+s$ with $0\leq s < T_0$, then
\begin{equation*}
E_u(t) \leq e^{(2\lVert b \rVert_\infty+\nu)T_0} e^{-\nu t} E_u(0),
\end{equation*}
where
\begin{equation}\label{eq:KPnu}
\nu = \frac{1}{T_0}\ln \left(\frac{1}{\mu+\varepsilon}\right),
\end{equation}
showing the result.
\end{proof}

\subsection{Proof of Theorem \ref{th:KPes}}With the previous result in hand,  in this section, we are going to prove a local stabilization result with an optimal decay rate. Using the same arguments in Section \ref{ss:MUnl} we have that~\eqref{eq:KP} is well-posed. Besides that we have by using Gronwall's inequality
\begin{equation*}
\lVert 
	(u(\cdot,\cdot,t), u(\cdot,\cdot, t-h(\cdot))) 
\rVert_{\mathcal{H}}^2
\leq
e^{2\xi \lVert b \rVert_\infty t}
\lVert 
	(u_0, z_0(\cdot,\cdot, -h(\cdot))) 
\rVert_{\mathcal{H}}^2
\end{equation*}
This implies directly that
\begin{equation*}
\lVert u \rVert_{C([0,T], L^2(\Omega))} 
\leq e^{\xi \lVert b \rVert_\infty T}
\lVert 
	(u_0, z_0(\cdot,\cdot, -h(\cdot))) 
\rVert_{\mathcal{H}}
\end{equation*}
and
\begin{equation*}
\lVert u \rVert_{L^2(0,T, L^2(\Omega))} 
\leq T^{\frac{1}{2}}e^{\xi \lVert b \rVert_\infty T}
\lVert 
	(u_0, z_0(\cdot,\cdot, -h(\cdot))) 
\rVert_{\mathcal{H}}
\end{equation*}
Now, multiplying the system \eqref{eq:KP} by
$xu(x,y,t)$, integrating by parts in $\Omega\times(0,T)$ we get 
\begin{equation*}
\begin{split}
&\frac{3\alpha}{2}
\int_0^T\int_0^L\int_0^L
	u_x^2(x,y,t)\,dx\,dy\,dt
-\frac{5\beta}{2}
\int_0^T\int_0^L\int_0^L
	u_{xx}^2(x,y,t)\,dx\,dy\,dt\\
& \leq
\left(
	\frac{L}{2} + L
		\left(
			\lVert a \rVert_\infty + \lVert b \rVert_\infty
		\right)Te^{2\xi\lVert b \rVert_\infty T}
\right)
\left\lVert
	(u_0,z_0(\cdot,\cdot, -h(\cdot)))
\right\rVert_{\mathcal{H}}^2 \\&+ 
\int_0^T\int_0^L\int_0^L 
	\lvert 
		u(x,y,t) 
	\rvert^3\,dx\,dy\,dt
	 \end{split}
\end{equation*}
From
\begin{equation}\label{eq:MUkato2}
\int_0^L\int_0^L u^3(x,y,t)\,dx\,dy
	\leq \frac{\varepsilon^4}{4} 
	\left\lVert{u}\right\rVert_{H_x^2(\Omega)}^2 
+	\frac{3}{4}
	\left(
		\frac{CL}{\varepsilon}
	\right)^{\frac{4}{3}}
	\left\lVert{u}\right\rVert_{L^2(\Omega)}^{\frac{10}{3}},
\end{equation}
and taking $E_u(0)\leq 1$, yields
\begin{equation*}
\lVert u \rVert_{B_H}^2 \leq \mathcal{\tilde{K}}
\left(
1+Te^{2\lVert b \rVert_\infty T} 
+ Te^{\frac{10}{3}\lVert b \rVert_\infty T}
+ e^{2\lVert b \rVert_\infty T}
\right) E_u(0),
\end{equation*}
where
\begin{equation*}
\mathcal{\tilde K}
:=
\frac{1}{\min\lbrace{1,3\alpha/2, -5\beta/2}\rbrace}
\left(
	\frac{L}{2} + L(\lVert a \rVert_\infty + \lVert b \rVert_\infty) 
	+\frac{1}{4}
	\left(
		\frac{cL}{\tilde{\delta}}
	\right)^{\frac{4}{3}}
\right)
\end{equation*}

Observe that, by definition, 
${\partial_x^{-1}} u(\cdot,\cdot, t) = \varphi(\cdot,\cdot,t) \in H_{x0}^2$ 
such that $\partial_x\varphi(\cdot,\cdot,t) = u(\cdot,\cdot,t)$.
Since $u\in H_{x0}^2$,  using Poincaré's inequality, we have that
\begin{equation*}
\left\lVert
	{\partial_x^{-1}} u(\cdot,\cdot,t) 
\right\rVert_{L^2(\Omega)}=
\left\lVert
	\varphi(\cdot,\cdot,t) 
\right\rVert_{L^2(\Omega)}\leq
L^2
\left\lVert
	\partial_x\varphi(\cdot,\cdot,t) 
\right\rVert_{L^2(\Omega)} = L^2
\left\lVert
	 u(\cdot,\cdot,t) 
\right\rVert_{L^2(\Omega)}.
\end{equation*}
Therefore,
\begin{equation*}
\lVert u \rVert_{\mathcal{B}_X}^2 \leq (1+L^2)
\mathcal{\tilde K}
\left(
1+Te^{2\lVert b \rVert_\infty T} 
+ Te^{\frac{10}{3}\lVert b \rVert_\infty T}
+ e^{2\lVert b \rVert_\infty T}
\right) E_u(0).
\end{equation*}

Let $(u_0,z_0(\cdot,\cdot,-h(\cdot)))$ be a initial data satisfying $\lVert (u_0,z_0(\cdot,\cdot,-h(\cdot)))\rVert_{\mathcal{H}}\leq r,$ where $r$ to be chosen later. The solution $u$ of \eqref{eq:KP} can be written as $u=u^1+u^2$ where $u^1$ is solution of the linear system associated with \eqref{eq:KP} considering the initial data  $u^1(x,y,0)=u_0(x,y)$ and $u^1(x,y,t) = z_0(x,y,t)$ and $u^2$ fulfills the nonlinear system  \eqref{eq:KP} with initial data $u^2(x,y,0)=0$ and $u^2(x,y,t) = 0$.

Fix $\mu \in (0,1)$, follows the same ideas introduced by \cite[Appendix A]{martinez2022}, there exists, $T_1>0$ such that
\begin{equation*}
e^{(2\lVert b \rVert_\infty + \nu)T_0-\nu T_1}< \frac{\eta}{2} 
\Longleftrightarrow
T_1 > -\frac{1}{\nu}\ln
\left(
	\frac{\eta}{2}
\right)
+\left(
	\frac{2\lVert b \rVert_\infty}{\nu} + 1
\right)T_0
\end{equation*}
with $\nu$ is defined by~\eqref{eq:KPnu} satisfiying $E_{u^1}(T_1) \leq \frac{\mu}{2} E_{u^1}(0).$
This implies together with \eqref{eq:MUkato2} that
\begin{equation*}
\begin{split}
E_u(T_1)
 \leq &\mu E_{u}(0)
+\left\lVert
(u^2(\cdot,\cdot,T_1), u^2(\cdot,\cdot,T_1-h(\cdot)))
\right\rVert_{\mathcal{H}}^2 \\
\leq&\mu E_u(0) + e^{(1+3\xi)T_1}
\left\lVert
uu_x
\right\rVert_{L^1(0,T_1, L^2(\Omega))}^2\\
\leq&\mu E_u(0)
+ e^{(1+3\xi)T_1}C_1^2C_2^2T^{\frac{1}{2}}
\lVert u \rVert_{\mathcal{B}_{X}}^4\\
\leq&(\mu+\mathcal{R})E_u(0),
\end{split}
\end{equation*}
where $$\mathcal{R}=e^{(1+3\xi)T_1}C_1^2C_2^2T_1^{\frac{1}{2}}
	(1+L^2)^2
\mathcal{\tilde K}^2
\left(
1+T_1e^{2\lVert b \rVert_\infty T_1} 
+ T_1e^{\frac{10}{3}\lVert b \rVert_\infty T_1}
+ e^{2\lVert b \rVert_\infty T_1}
\right)^2r.$$
Therefore, given $\varepsilon>0$ such that $\mu+\varepsilon < 1$, 
we take $r>0$ such that
\begin{equation*}
r < \frac{\varepsilon}{e^{(1+3\xi)T_1}C_1^2C_2^2T_1^{\frac{1}{2}}
	(1+L^2)^2
\mathcal{\tilde K}^2
\left(
1+T_1e^{2\lVert b \rVert_\infty T_1} 
+ T_1e^{\frac{10}{3}\lVert b \rVert_\infty T_1}
+ e^{2\lVert b \rVert_\infty T_1}
\right)^2}
\end{equation*}
to obtain $E_u(T_1) \leq (\mu+\varepsilon) E_u(0),$ with $\mu+\varepsilon < 1$. Using a prolongation argument, first for the time $2T_1$ and after for $mT_1$, the result is obtained. 
\qed


\section{\texorpdfstring{$\mu_i$}{}-system: Stability results}\label{Sec2} The main objective of this section is to prove the local and global exponential stability for the solutions of \eqref{eq:MU} using two different approaches. 

\subsection{Local  stabilization: Proof of Theorem \ref{th:MUlyes}}  Consider the Lyapunov's functional
$
V(t) = E_u(t) + \eta V_1(t) + \sigma V_2(t),
$
where $E_u(t)$ is defined by~\eqref{eq:MUen},  $V_1(t)$ defined by \eqref{eq:MUv1}
and
\begin{equation}\label{eq:MUv2}
V_2(t) = \frac{\xi}{2}
	\int_0^L\int_0^L\int_0^1
		(1-\rho)a(x,y)u^2(x,y,t-\rho h)\,d\rho\,dx\,dy.
\end{equation}
Using the same argument as in the proof of Proposition \ref{pr:Pes1} we see that 
\begin{equation}\label{eq:MUvv}
\begin{split}
&\frac{d}{dt} V(t) + 2\theta V(t) 
 \leq 
\left(
	\frac{\mu_2}{2}-\frac{\xi}{2h}+\eta L\mu_2
\right)
\int_0^L\int_0^L a(x,y) u^2(x,y,t-h)\,dx\,dy \\
& +\left(
	\theta\xi-\frac{\xi}{2h}\sigma+\theta\sigma\xi
\right)
\int_0^L\int_0^L\int_0^1 a(x,y) u^2(x,y,t-\rho h)\,d\rho\,dx\,dy \\
&+\left(
	\frac{\mu_2}{2}-\mu_1+\frac{\xi}{2h}+2\eta L \mu_1 + 
	\eta L \mu_2 + \frac{\xi}{2h}\sigma
\right) 
\int_0^L\int_0^L a(x,y) u^2(x,y,t)\,dx\,dy \\
& +(\theta+2\theta\eta L)
\int_0^L\int_0^L u^2(x,y,t)\,dx\,dy 
-3\alpha\eta 
\int_0^L\int_0^L u_x^2(x,y,t)\,dx\,dy\\
&+\frac{2}{3}\eta
\int_0^L\int_0^L u^3(x,y,t)\,dx\,dy + 5\beta \eta
\int_0^L \int_0^L u_{xx}^2(x,y,t)\,dx\,dy,
\end{split}
\end{equation}
for all $\theta>0$.  Note that,  thanks to Theorem \ref{thm:Besov} we have
\begin{equation*}
\int_0^L\int_0^L u^3(x,y,t)\,dx\,dy  
\leq 
{ \frac{1}{4}
	\left\lVert
		u_{xx}
	\right\rVert_{L^2(\Omega)}^2
+\frac{3}{4}(C L)^{\frac{4}{3}} r^{\frac{4}{3}}
	\left\lVert
		u
	\right\rVert_{L^2(\Omega)}^2.}
\end{equation*}
Putting this previous inequality in~\eqref{eq:MUvv},  and using Poincaré's inequality and \eqref{eq:MUgn},  we get
\begin{equation*}
\begin{aligned}
\frac{d}{dt} V(t) +& 2\theta V(t) \leq  \left(
		5\beta\eta + \frac{1}{6}\eta
	\right)\int_0^L\int_0^L u_{xx}^2(x,y,t)\,dx\,dy \\
	&+\left(
	\theta(1+2\eta L) L^2 
	+ \frac{1}{2}\eta C^{\frac{4}{3}} r^{\frac{4}{3}} L^{\frac{10}{3}}
	-3\alpha \eta
\right)
	\int_0^L\int_0^L u_x^2(x,y,t)\,dx\,dy.
\end{aligned}
\end{equation*}
Consequently, taking the previous constant as in the statement of the theorem we have that
\begin{equation}\label{Vn}
	V'(t)+2\gamma V(t) \leq 0.
\end{equation}
Finally, from the following relation
$E(t) \leq V(t) \leq \left(1+\max\left\lbrace{2\eta L,\sigma}\right\rbrace\right) E(t)$
 and \eqref{Vn}, we obtain
\begin{equation*}
E(t) \leq V(t) \leq e^{-2\theta t}V(0) 
\leq (1+\max\lbrace 2\eta L, \sigma \rbrace)e^{-2\sigma t}E(0),\quad \forall t>0,
\end{equation*}
and Theorem \ref{th:MUlyes} is proved. \qed

\subsection{Global stabilization:  Proof of Theorem \ref{th:MUes}}
As is classical in control theory, Theorem \ref{th:MUes} is a consequence of the following observability inequality
\begin{equation}\label{eq:MUobs}
\begin{aligned}
E_u(0)  \leq& \mathcal{C}
	\left(
		 \int_0^T\int_0^L \partial^2_xu(0,y,t)^2\,dy
		+\int_0^T\int_0^L({\partial_x^{-1}} \partial_yu(0,y,t))^2\,dy\,dt
	\right.\\
	&
	\left.%
		+\int_0^T\int_0^L\int_0^L a(x,y)(u^2(x,y,t-h)+u^2(x,y,t)\,dx\,dy\,dt%
	\right)
\end{aligned}
\end{equation}
Observe that using the same ideas of  \eqref{eq:MUr3}, we get
\begin{equation}\label{eq:MUobs1}
\begin{split}
&T\left\lVert{u_0}\right\rVert_{L^2(\Omega)}^2 
\leq \left\lVert{u}\right\rVert_{L^2(0,T,L^2(\Omega))}^2 
- \beta T \int_0^T\int_0^L \partial^2_xu(0,y,t)^2\,dy\,dt 
\\&+ \gamma T \int_0^T\int_0^L 
	\left(%
		{{\partial_x^{-1}} \partial_yu(0,y,t)}
	\right)^2\,dy\,dt\\&+T(2\mu_1+\mu_2) \int_0^T\int_0^L\int_0^L  a(x,y) u^2(x,y,t)\,dx\,dy\,dt \\
&+ T\int_0^T\int_0^L\int_0^L  a(x,y) \mu_2 u^2(x,y,t-h)\,dx\,dy\,dt
\end{split}
\end{equation}
Moreover,  multiplying~\eqref{eq:MUlin1}$_5$ by $a(x,y)\xi z(x,y,\rho,s)$, integrating in $\Omega\times(0,1)\times(0,T)$ and taking in account that $z(x,y,\rho,t)=u(x,y,t-\rho h)$ we obtain
\begin{equation}\label{eq:MUobs2}
\begin{split}
&\int_0^L\int_0^L\int_0^1 a(x,y)z^2(x,\rho,0)\,d\rho\,dx\,dy\\& \leq \frac{1}{hT}\int_0^T\int_0^L\int_0^L a(x,y) u^2(x,y,t)\,dx\,dy\,dt\\
&+ \left(%
	\frac{1}{Th}+\frac{1}{h}
	\right)
\int_0^T\int_0^L\int_0^L a(x,y) u^2(x,y,t-h)\,dx\,dy\,dt
\end{split}
\end{equation}
Gathering \eqref{eq:MUobs3} and \eqref{eq:MUobs1}, we see that to show \eqref{eq:MUobs} is sufficient to prove that for any $T$ and $R>0$, there exists $K:=K(R,T)>0$ such that
\begin{equation}\label{eq:MUobs3}
\begin{split}
&\left\lVert u \right\rVert_{L^2(0,T, L^2(0,L))}^2  \leq {K}
	\left(
		 \int_0^T\int_0^L \partial_{x}^2u(0,y,t)^2\,dy\right.\\
		&+\int_0^T\int_0^L({\partial_x^{-1}} \partial_yu(0,y,t))^2\,dy\,dt	
		+\int_0^T\int_0^L\int_0^L a(x,y)u^2\,dx\,dy\,dt\\
	&\left.
		+\int_0^T\int_0^L\int_0^L a(x,y)u^2(x,y,t-h)\,dx\,dy\,dt
	\right)
\end{split}
\end{equation}
holds for all solutions of~\eqref{eq:MU} with initial data $\left\lVert (u_0,z_0(\cdot,\cdot,-h(\cdot)))\right\rVert_{\mathcal{H}}\leq R$.

To prove it, let us argue by contradiction.  Suppose that~\eqref{eq:MUobs3} does not holds, then there exists a sequence $\left(u^n\right)_{n}\subset \mathcal{B}_X$ of solutions of~\eqref{eq:MU} with initial data $\left\lVert(u_0^n,z_0^n(\cdot,\cdot,-h(\cdot)))\right\rVert_{\mathcal{H}}\leq R$ such that
$
\lim_{n\to\infty} \frac{\left\lVert u^n \right\rVert_{L^2(0,T, L^2(\Omega))}^2}{B(u^n)}= +\infty
$
where
\begin{equation*}
\begin{split}
B(u^n) =& \int_0^T\int_0^L \lvert \partial^2_xu^n(0,y,t)\rvert^2\,dy+\int_0^T\int_0^L\lvert({\partial_x^{-1}}\partial_y u^n(0,y,t))\rvert^2\,dy\,dt\\
		&+\int_0^T\int_0^L\int_0^L a(x,y)\left(\lvert u^n(x,y,t)\rvert^2+\lvert u^n(x,y,t-h)\rvert^2\right)\,dx\,dy\,dt.
\end{split}
\end{equation*}
Let  $\lambda_n = \left\lVert u^n \right\rVert_{L^2(0,T,L^2(\Omega))}$ and $v^n(x,y,t) = 1/\lambda_n u^n(x,y,t)$,  then $v^n$ satisfies $\eqref{eq:MU}_1$ with the following boundary conditions
\begin{equation}\label{eq:MUvn}
\begin{cases}
	v^n(0,y,t)=v^n(L,y,t)=0,& y\in(0,L), t>0, \\
	\partial_xv^n(L,y,t) = \partial_xv^n(0,y,t)=\partial^2_xv^n(L,y,t)=0,& y\in(0,L), t>0, \\
	v^n(x,L,t)=v^n(x,0,t)=0,& x\in(0,L), t>0 \\
	v^n(x,y,0)=\frac{u_0}{\lambda_n}(x,y), \quad 
	v^n(x,y,t)=\frac{z_0}{\lambda_n}(x,y,t), & (x,y)\in\Omega,\ t\in(-h,0)\\
\left\lVert{v^n}\right\rVert_{L^2(0,T, L^2(\Omega))}^2=1
\end{cases}
\end{equation}
and $B(v^n)\to 0 $ as $n\to\infty$. Therefore, we have from~\eqref{eq:MUobs1} that
\begin{equation}
\left\lVert{v^n(\cdot,\cdot, t)}\right\rVert_{L^2(\Omega)}^2 \leq \frac{1}{T}\left\lVert{v^n}\right\rVert_{L^2(0,T, L^2(\Omega))}^2 + cB(v^n)
\end{equation}
which together with $\eqref{eq:MUvn}_6$ and $B(v^n)\to 0 $ gives that
$\left(v^n(\cdot,\cdot,0)\right)_n$ is bounded in $L^2(\Omega)$.  Additionally to that, the following inequality (see \eqref{eq:MUobs2}) 
\begin{equation*}
\begin{split}
 \int_{\Omega}\int_0^1& a(x,y)\frac{1}{\lambda_n^2}
	\left\lvert 
		z^n(x,\rho,0)
	\right\rvert^2\,d\rho\,dx\,dy 
\leq  \frac{1}{hT}\int_0^T\int_{\Omega} a(x,y) 
	\left\lvert 
		v^n(x,y,t)
	\right\rvert^2\,dx\,dy\,dt\\
&+	\left(%
		\frac{1}{hT}+\frac{1}{h}
	\right)
\int_0^T\int_{\Omega} a(x,y) 
	\left\lvert 
		v^n(x,y,t-h)
	\right\rvert^2\,dx\,dy\,dt 
	\end{split}
\end{equation*}
ensures that $\left(\sqrt{a(x,y)}v^n(\cdot,\cdot,-h(\cdot))\right)_n$ is bounded in $L^2(\Omega\times(0,1))$ and from \eqref{eq:MUr1}, $(\lambda_n)_n\subset \mathbb{R}$  is bounded.
On the other hand,  as a consequence of Proposition~\ref{pr:MUreg} we have that
$\left(v^n\right)_n$ is bounded in $L^2(0,T, H_{x}^2(\Omega))$.  Now,  using Theorem~\ref{thm:Besov}, we get
\begin{equation*}
\left\lVert v^nv_x^n \right\rVert_{L^2(0,T, L^1(\Omega))} 
\leq C^2
	\left\lVert%
		v^n
	\right\rVert_{L^\infty(0,T, L^2(\Omega))}^{\frac{3}{2}}
	\left\lVert%
		v^n
	\right\rVert_{L^2(0,T, H_x^2(\Omega))}
\end{equation*}
and $(v^n v_x^n)_n$ is bounded in $L^2(0,T,L^1(\Omega))$.  Defining $\partial_yv^n =\partial_x \varphi^n$,  and using once again Theorem~\ref{thm:Besov} we have
$
\left\lVert
	{\partial_x^{-1}} v_{yy}^n 
\right\rVert_{L^2(\Omega)} \leq C^2
\lVert v_x^n\rVert_{L^2(\Omega)} < \infty.
$
Consequently, using Cauchy-Schwarz inequality
\begin{equation*}
\left\lvert
	\left\langle
		{\partial_x^{-1}}v_{yy}^n , \xi
	\right\rangle_{H^{-3}(\Omega), H_0^3(\Omega)}
\right\rvert
 \leq 
	\left\lVert
		 \varphi_{y}^n
	\right\rVert_{L^2(\Omega)}
	\left\lVert
		 \xi
	\right\rVert_{L^2(\Omega)} 
 \leq C^2
	\left\lVert
		v_x^n
	\right\rVert_{L^2(\Omega)}
	\left\lVert
		 \xi
	\right\rVert_{L^2(\Omega)}. 
\end{equation*}
Observe that $(v^n)_n$ bounded in $L^2(0,T; H_x^2(\Omega))$ implies, in particular, that $(v_x^n)_n$ is bounded in $L^2(0,T, L^2(\Omega))$,  so
\begin{equation*}
\begin{split}
\left\lVert {\partial_x^{-1}} v_{yy}^n \right\rVert^2_{L^2(0,T; H^{-3}(\Omega))} 
			 \leq 
	{C}
		\int_0^T 
			\left\lVert
		v_{xx}^n
	\right\rVert_{L^2(\Omega)}
	\left\lVert
		v^n
	\right\rVert_{L^2(\Omega)}\,dt
	 \leq \frac{C}{2} \left\lVert v^n\right\rVert_{L^2(0,T,H_x^2(\Omega))},
\end{split}
\end{equation*}
where we used that $H_x^2(\Omega)\subset L^2(\Omega)$.  

Thus, the previous analysis ensures that
\begin{equation*}
\begin{aligned} 
		v_t^n(x,y,t) =&-\alpha v_{xxx}^n(x,y,t) + \beta v_{xxxxx}^n(x,y,t) + \gamma\partial_x^{-1} v_{yy}^n(x,y,t)\\
		&+ \lambda_nv^n(x,y,t)v_x^n(x,y,t)+ a(x,y)\left(\mu_1v^n(x,y,t) + \mu_2 v^n(x,y,t-h)\right),
	\end{aligned}
\end{equation*}
is bounded in $L^2(0,T, H^{-3}(\Omega))$, which together with a  classical compactness results\footnote{See \cite{Simon}.}, give us the existence of a sequence $(v^n)_n$ relatively compact in $L^2(0,T, L^2(\Omega))$, that is, there exists a subsequence, still denoted $(v^n)_n$,
\begin{equation}\label{sim}
v_n \rightarrow v \hbox{ in } L^2(0,T,L^2(\Omega))
\end{equation}
with
$\left\lVert v \right\rVert_{L^2(0,T, L^2(\Omega))}=1.$ 

Finally, from  weak lower semicontinuity of convex functional, we obtain
\begin{equation}\label{v_st}
v(x,y,t)=0 \in \omega\times(0,T)\text{ and } \partial^2_xv(0,y,t)=0\text{ in } (0,L)\times(0,T).
\end{equation}
Since $(\lambda_n)_n$ is bounded, we can extract a subsequence denoted $(\lambda_n)_n$ which converges to $\lambda\geq 0$.

We claim that ${\partial_x^{-1}} \partial^2_yv^n \rightarrow {\partial_x^{-1}} \partial^2_yv$ in $L^2(0,T,H^{-2}(\Omega))$. In fact, from definition of $\mathcal{B}_X$ we have ${\partial_x^{-1}} v^n = \varphi^n$ where
$\partial_x\varphi^n = v^n$, $v^n(\cdot,\cdot,t)\in H_{x0}^1(\Omega)$ and $\varphi^n(\cdot,\cdot,t)\in H_{x0}^1(\Omega)$. Since  $\partial_x^{-1}\partial^2_yv^n =\partial^2_y \varphi^n$ we obtain
\begin{equation*}
	\begin{split}
		&\left\lVert
		{\partial_x^{-1}}v_{yy}^n(\cdot,\cdot,t) - 
		{\partial_x^{-1}} v_{yy}(\cdot,\cdot,t)
		\right\rVert_{H^{-2}(\Omega)} =
		\left\lVert
		\varphi_{yy}^n(\cdot,\cdot,t) - \varphi_{yy}(\cdot,\cdot,t)
		\right\rVert_{H^{-2}(\Omega)}\\
		& \leq c 
		\left\lVert
		\varphi^n(\cdot,\cdot,t) - \varphi(\cdot,\cdot,t)
		\right\rVert_{L^2(\Omega)} \leq cL^2
		\left\lVert
		\varphi_x^n(\cdot,\cdot,t) - \varphi_x(\cdot,\cdot,t)
		\right\rVert_{L^2(\Omega)}\\&= cL^2 
		\left\lVert
		v^n(\cdot,\cdot,t) - v(\cdot,\cdot,t)
		\right\rVert_{L^2(\Omega)}.
	\end{split}
\end{equation*}
Therefore, the desired convergence follows from the previous inequality and convergence \eqref{sim}.

 Therefore, from the above convergences $v(x,y,t)$ satisfies \eqref{v_st} and $\eqref{eq:MU}$ with the following conditions
\begin{equation}\label{eq:MUv}
\begin{cases}
	v(0,y,t)=v(L,y,t)= 0,& y\in(0,L),\  t>0\\
	\partial_xv(L,y,t) = \partial_xv(0,y,t)=\partial^2_xv(L,y,t)=0,& y\in(0,L),\  t>0\\
	v(x,L,t)=v(x,0,t)=0,& x\in (0,L),\ t>0\\
	\left\lVert v \right\rVert_{L^2(0,T, L^2(\Omega))}=1.
\end{cases}
\end{equation}
Thus,  for $\lambda=0$ we obtain $v=0$,  thanks to Holmgren's uniqueness theorem, which is a contradiction with the fact that $\left\lVert v \right\rVert_{L^2(0,T, L^2(\Omega))} = 1$.  Otherwise,  if $\lambda>0$,  we can show that $v\in L^2(0,T, H_x^5(\Omega)\cap X^2(\Omega))$ and applying \cite[Theorem 1.2]{ailton2021}, follows that $u\equiv 0$ in $\Omega\times (0,T)$,  achieving Theorem \ref{th:MUes}. 
\qed

\section*{Acknowledgment}

Capistrano--Filho was supported by CAPES grant 88881.311964/2018-01 and \linebreak 88881.520205/2020-01,  CNPq grant 307808/2021-1 and  401003/2022-1,  MATHAMSUD grant 21-MATH-03 and Propesqi (UFPE).  Galeano acknowledges support from FACEPE grant IBPG-0909-1.01/20.  This work is part of the Ph.D. thesis of Mu\~{n}oz at the Department of Mathematics of the Universidade Federal de Pernambuco.

\appendix

\section{\texorpdfstring{$\mu_i$}{}-system: Well-posedness}\label{Sec3} In this appendix, we deal with the study of the $\mu_i$-system \eqref{eq:MU} that is essential to obtain results for \eqref{eq:KP}. Since the results are classical, we just give the main results and the idea of the proofs.

\subsection{Linear system} Here,  we use semigroup theory to obtain well-posedness results for the linear system associated with \eqref{eq:MU}.  To do that,  consider $z(x,y,\rho,t)=u(x,y,t-\rho h)$, for $(x,y)\in \Omega$, $\rho\in (0,1)$ and $t>0$. Then $z(x,y,\rho,t)$ satisfies the transport equation
\begin{equation}\label{eq:MUtp}
\begin{cases}
h\partial_tz(x,y,\rho,t)+\partial_{\rho}z(x,y,\rho,t)=0, & (x,y)\in \Omega,\ \rho\in (0,1),\ t>0, \\
z(x,y,0,t)=u(x,y,t), & (x,y) \in\Omega,\ t>0, \\
z(x,y,\rho,0) = z_0(x,y,\rho,-\rho h), & (x,y)\in \Omega, \ \rho\in(0,1).
\end{cases}
\end{equation}

Let $\mathcal{H}=L^2(\Omega)\times L^2\left(\Omega\times(0,1)\right)$ a Hilbert space equipped with the inner product
\begin{equation*}
\begin{split}
	\left\langle
		(u,z)\,(v,w)
	\right\rangle_{\mathcal{H}}
=&\int_0^L\int_0^L  u(x,y)v(x,y)\,dx\,dy\\&+\xi\left\lVert a\right\rVert_\infty \int_0^L\int_0^L  \int_0^1 z(x,y,\rho)w(x,y,\rho)\, d\rho\,dx\,dy,
\end{split}
\end{equation*}
with $\xi$ satisfies~\eqref{eq:MUcond}. To study the well-posedness in the Hadamard sense, we need to rewrite the linear system associated with \eqref{eq:MU} as an abstract problem. Let $U(t)=\left(u(\cdot,\cdot,t), z(\cdot,\cdot,\cdot,t)\right)$ and denote $z(1):=z(x,y,1,t)$. From the linear system associated with \eqref{eq:MU} and~\eqref{eq:MUtp} we get the next system
\begin{equation}\label{eq:MUlin1}
\begin{split}
\begin{cases}
	\begin{aligned} 
	&\partial_tu(x,y,t) + \alpha \partial_{x}^3u(x,y,t) + \beta\partial_{x}^5u(x,y,t) \\ &+ \gamma\partial_x^{-1} \partial^2_yu(x,y,t)+ \frac{1}{2}\partial_x(u^2(x,y,t)) \\
&+ a(x,y)\left(\mu_1u(x,y,t) + \mu_2 z(1)\right)=0
	\end{aligned}& (x,y,t)\in \Omega\times\mathbb{R}^+\\
	u(0,y,t)=u(L,y,t)=0,& y\in(0,L), \ t\in(0,T), \\
	\partial_x u(L,y,t) =\partial_x u(L,y,t) =0,&y\in(0,L), \ t\in(0,T), \\
    \partial_xu(0,y,t)=\partial^2_xu(L,y,t)=0,&y\in(0,L), \ t\in(0,T), \\
	u(x,L,t)=u(x,0,t)=0,& x\in(0,L), \ t\in(0,T),\\
	u(x,y,0)=u_0(x,y), & (x,y)\in \Omega \\
h\partial_tz(x,y,\rho,t)+\partial_{\rho}z(x,y,\rho,t)=0, & (x,y)\in \Omega,\ \rho\in (0,1),\ t>0, \\
z(x,y,0,t)=u(x,y,t), & (x,y) \in\Omega,\ t>0, \\
z(x,y,\rho,0) = z_0(x,y,\rho,-\rho h), & (x,y)\in \Omega, \ \rho\in(0,1).
\end{cases}
\end{split}
\end{equation}
which is equivalent to
\begin{equation}\label{eq:MUabs}
\begin{cases}
\dfrac{d}{d t}U(t)= AU(t), \\
U(0)=\left(u_0(x,y), z_0(x,y,-\rho h)\right)
\end{cases}
\end{equation}
where $A\colon D(A)\subset \mathcal{H}\to \mathcal{H}$ is defined by
\begin{equation}\label{eq:MUA}
A(u, z) = 
	\left(%
		-\alpha \partial^3_xu-\beta\partial^5_x u-\gamma {\partial_x^{-1}} \partial^2_yu-a(x,y)(\mu_1 u+\mu_2 z(1)); -h^{-1}\partial_{\rho}z\right)
\end{equation}
with the dense domain given by
\begin{equation*}
D(A):=
	\left\lbrace
		\begin{aligned}
			&(u,z)\in \mathcal{H} \colon \\
			&u\in H_x^5(\Omega)\cap X^2(\Omega),\\
			&\partial_{\rho}z\in L^2(\Omega\times(0,1)),
		\end{aligned} \left\vert
		\begin{aligned}
			&u(0,y)=u(L,y)=u(x,0)=u(x,L)=0,\\
			&\partial_xu(L,y)=\partial_xu(0,y)=\partial^2_xu(L,y)=0, \\
			&z(x,y,0)=u(x,y) 
			\end{aligned}\right.
	\right\rbrace.
\end{equation*}

The next result is classical and can be omitted. 

\begin{lemma}\label{le:MUAadj}
The operator $A$ is closed and the adjoint $A^\ast\colon D(A^\ast)\subset \mathcal{H}\to \mathcal{H}$ is given by
\begin{equation*}
A^\ast(u,z)=
	\left(%
		\alpha \partial^3_xu+\beta \partial^5_xu+\gamma {\partial_x^{-1}} \partial^2_yu-a(x,y)\mu_1 u+\frac{\xi\left\lVert a\right\rVert_\infty}{h}z(\cdot,\cdot,0); h^{-1}\partial_{\rho}z\right)
\end{equation*}
with dense domain
\begin{equation*}
D(A^\ast):=
	\left\lbrace
		\begin{aligned}
			&(u,z)\in \mathcal{H} \colon \\
			&u\in H_x^5(\Omega)\cap X^2(\Omega),\\
			&\partial_{\rho}z\in L^2(\Omega\times(0,1)),
		\end{aligned} \left\vert
		\begin{aligned}
			&u(0,y)=u(L,y)=u(x,0)=u(x,L)=0,\\
			&\partial_xu(L,y)=\partial_xu_x(0,y)=\partial^2_xu(0,y)=0, \\
			&z(x,y,1)=-\dfrac{a(x,y)h\mu_2}{\xi\left\lVert a\right\rVert_{\infty}}u(x,y) 
			\end{aligned}\right.
	\right\rbrace.
\end{equation*}
\end{lemma}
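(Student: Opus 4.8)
The plan is to obtain $A^\ast$ directly from the definition of the adjoint in the weighted space $\mathcal H$: for $(u,z)\in D(A)$ and an arbitrary smooth pair $(v,w)$ in the candidate domain, I would expand $\langle A(u,z),(v,w)\rangle_{\mathcal H}$, transfer every differential and integral operator off $(u,z)$ by integration by parts, and then read off both the formula for $A^\ast$ and the domain $D(A^\ast)$ from the requirement that all boundary contributions vanish for every admissible $(u,z)$. Closedness of $A$ is not really part of this computation: since $A$ is densely defined it is automatically closable, and its closedness is inherited from the $m$-dissipativity / $C_0$-semigroup generation established for the linear model (cf.\ Proposition~\ref{pr:MUdis}).

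First I would dispatch the two odd-order spatial terms $\int_\Omega(-\alpha\partial_x^3u)v$ and $\int_\Omega(-\beta\partial_x^5u)v$, integrating by parts three and five times in $x$. The conditions $u(0,y)=u(L,y)=0$, $\partial_xu(0,y)=\partial_xu(L,y)=0$ and $\partial_x^2u(L,y)=0$ built into $D(A)$ annihilate all but finitely many boundary terms, and since both operators are of odd order each transfer flips the sign, so they reappear on the $(v,w)$ side as $+\alpha\partial_x^3v+\beta\partial_x^5v$, as claimed. The residual boundary integrals vanish for all such $u$ precisely when $v(0,y)=v(L,y)=\partial_xv(0,y)=\partial_xv(L,y)=0$ together with $\partial_x^2v(0,y)=0$; note that the second-derivative condition migrates from the endpoint $x=L$ to $x=0$, which is the expected asymmetry for the adjoint of a one-sided fifth-order operator.

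The main obstacle is the transverse term $\gamma\partial_x^{-1}\partial_y^2u$. Writing $\partial_x^{-1}\partial_y^2u=\partial_y^2\Phi$ with $\Phi=\partial_x^{-1}u\in X^2(\Omega)$, I would integrate by parts twice in $y$, using $u(x,0)=u(x,L)=0$ to discard the $y$-boundary terms, and then use the integration-by-parts (Fubini) identity for the antiderivative operator $\partial_x^{-1}\varphi=\int_x^L\varphi(s,\cdot)\,ds$, which is skew-adjoint in $x$ modulo boundary data. Combining the symmetric $\partial_y^2$ with the skew $\partial_x^{-1}$ yields $\langle\gamma\partial_x^{-1}\partial_y^2u,v\rangle_{L^2(\Omega)}=-\langle u,\gamma\partial_x^{-1}\partial_y^2v\rangle_{L^2(\Omega)}$ up to boundary integrals of the form $\int_0^L\partial_x^{-1}\partial_yu(0,y)\,(\cdot)\,dy$, the very quantities that occur in the dissipation identity; checking that these residual terms cancel on $D(A)$ and reproduce $+\gamma\partial_x^{-1}\partial_y^2v$ is the delicate step. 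For the delay block, integrating $\xi\norm{a}_\infty\int_\Omega\int_0^1(-h^{-1}\partial_\rho z)w$ by parts in $\rho$ produces the transported term $h^{-1}\partial_\rho w$ in the second slot of $A^\ast$, a $\rho=0$ boundary term which, via $z(x,y,0)=u(x,y)$, yields the coupling $\tfrac{\xi\norm{a}_\infty}{h}w(\cdot,\cdot,0)$ in the first slot, and a $\rho=1$ boundary term $-\tfrac{\xi\norm{a}_\infty}{h}\int_\Omega z(1)w(1)\,dx\,dy$.

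Finally I would collect the zeroth-order terms. The damping contribution $-a\mu_1u$ is self-paired and reappears unchanged as $-a\mu_1v$, so the only terms not yet matched are $-\int_\Omega a\mu_2z(1)v\,dx\,dy$ coming from $A$ and the $\rho=1$ boundary term above. Demanding that their sum vanish for every admissible boundary value $z(1)$ forces $w(x,y,1)=-\tfrac{a(x,y)h\mu_2}{\xi\norm{a}_\infty}v(x,y)$, which is exactly the interface condition defining $D(A^\ast)$. Reading off the accumulated expression then gives precisely the stated $A^\ast$ and $D(A^\ast)$. I expect that establishing the skew-adjointness of $\partial_x^{-1}\partial_y^2$ and verifying the cancellation of the $\rho=1$ coupling against $-a\mu_2z(1)v$ are where essentially all the genuine work lies, the remaining manipulations being routine integrations by parts.
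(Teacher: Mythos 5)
The paper itself offers no proof of this lemma (it is introduced with ``The next result is classical and can be omitted''), so there is nothing to compare line by line; your duality computation is certainly the intended route. Most of it is carried out correctly: the odd-order terms $-\alpha\partial_x^3$ and $-\beta\partial_x^5$ do transfer with a sign flip, the surviving boundary integrals $\alpha\,\partial_x^2u(0,y)v(0,y)$ and $[\partial_x^2u\,\partial_x^2v]_0^L$ correctly force $v(0,y)=0$ and the migration of the second-derivative condition from $x=L$ to $x=0$, and your handling of the transport block is exactly right: the $\rho=0$ boundary term produces $\frac{\xi\left\lVert a\right\rVert_\infty}{h}w(\cdot,\cdot,0)$ in the first slot via $z(\cdot,\cdot,0)=u$, and pairing the $\rho=1$ boundary term against $-a\mu_2 z(1)v$ (with $z(1)$ a free trace) forces the interface condition $w(\cdot,\cdot,1)=-\frac{a h\mu_2}{\xi\left\lVert a\right\rVert_\infty}v$.

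Two points are genuine gaps. First, your treatment of closedness is circular relative to the paper's logical order: Proposition~\ref{pr:MUdis} \emph{uses} the fact that $A-\lambda I$ is a densely defined \emph{closed} operator as a hypothesis of the Lumer--Phillips corollary, so closedness cannot be ``inherited from'' the semigroup generation; it must be proved directly (take $(u_n,z_n)\to(u,z)$ with $A(u_n,z_n)\to(f,g)$ in $\mathcal H$ and identify the limit distributionally), or one must argue that $A$ equals its closure $A^{**}$ after showing $D(A^*)$ is dense. Second, and more substantively, the step you yourself flag as delicate --- the transfer of $-\gamma\partial_x^{-1}\partial_y^2$ --- is asserted rather than performed, and it is the only non-routine part of the lemma. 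Writing $\Phi=\partial_x^{-1}u$, $\Psi=\partial_x^{-1}v$ and integrating by parts in $x$ and then twice in $y$ (using $u(x,0)=u(x,L)=v(x,0)=v(x,L)=0$), one obtains $-\gamma\int_\Omega(\partial_x^{-1}\partial_y^2u)v\,dx\,dy=\gamma\int_\Omega u\,\partial_x^{-1}\partial_y^2v\,dx\,dy-\gamma\int_0^L\partial_x^{-1}\partial_yu(0,y)\,\partial_x^{-1}\partial_yv(0,y)\,dy$. The residual boundary pairing does not vanish for arbitrary $u\in D(A)$ and $v$ in the stated $D(A^*)$: neither domain imposes $\partial_x^{-1}\partial_y(\cdot)(0,y)=0$, and this trace is precisely the quantity that survives (squared) in the dissipation identity \eqref{eq:MUenD}, so it is not identically zero on $D(A)$. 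Your proof is incomplete until you either show this pairing vanishes on the relevant classes, or add the corresponding trace condition to $D(A^*)$; simply asserting the cancellation leaves the core of the lemma unproved.
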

\begin{proposition}\label{pr:MUdis}
Assume that $a\in L^\infty(\Omega)$ is a nonnegative function and~\eqref{eq:MUcond} is satisfied. Then $A$ is the infinitesimal generator of a $C_0$-semigroup in $\mathcal{H}$.
\end{proposition}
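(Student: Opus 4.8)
The plan is to apply the Lumer--Phillips theorem in the form that uses the adjoint: since $\mathcal{H}$ is a Hilbert space and $A$ is densely defined and closed (the latter being part of Lemma \ref{le:MUAadj}), it suffices to produce a constant $c\geq 0$ for which both $A-cI$ and its adjoint $A^{\ast}-cI$ are dissipative; then $A-cI$ generates a $C_0$-semigroup of contractions and hence $A$ generates a $C_0$-semigroup with $\|e^{tA}\|_{\mathcal{L}(\mathcal{H})}\leq e^{ct}$. This route is convenient here because $A^{\ast}$ and $D(A^{\ast})$ have already been computed, so no separate surjectivity argument for $\lambda I-A$ is required.

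First I would compute $\langle AU,U\rangle_{\mathcal{H}}$ for $U=(u,z)\in D(A)$. Integrating by parts in $x$, the third- and fifth-order terms $-\alpha u_{xxx}-\beta u_{xxxxx}$ produce, after using the boundary conditions $u(0,y)=u(L,y)=\partial_x u(0,y)=\partial_x u(L,y)=\partial_x^2 u(L,y)=0$, the boundary contributions $-\tfrac{\alpha}{2}\int_0^L \partial_x u(0,y)^2\,dy$ (which vanishes) together with $\tfrac{\beta}{2}\int_0^L \partial_x^2 u(0,y)^2\,dy$, of favorable sign since $\alpha>0$ and $\beta<0$. The nonlocal term $-\gamma\partial_x^{-1}u_{yy}$ contributes, via the definition of $\partial_x^{-1}$ and the condition $\psi(L,y)=0$, a term $-\tfrac{\gamma}{2}\int_0^L(\partial_x^{-1}u_y(0,y))^2\,dy\leq 0$; these are exactly the dissipative boundary terms appearing in \eqref{eq:MUenD}. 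The transport component $-h^{-1}z_\rho$, weighted by $\xi\|a\|_\infty$, integrates to $\tfrac{\xi\|a\|_\infty}{2h}\int_\Omega(u^2-z(1)^2)$ after using $z(x,y,0)=u$. What remains are the zeroth-order interior terms $-\int_\Omega a(\mu_1 u^2+\mu_2 z(1)u)$; bounding the cross term by Young's inequality and using $0\leq a\leq\|a\|_\infty$ together with the window \eqref{eq:MUcond}, namely $h\mu_2<\xi<h(2\mu_1-\mu_2)$, these combine with the transport terms to yield $\langle AU,U\rangle_{\mathcal{H}}\leq c\|U\|_{\mathcal{H}}^2$ for an explicit $c\geq 0$.

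Next I would run the same computation for $A^{\ast}$ on $D(A^{\ast})$, now using the adjoint boundary data from Lemma \ref{le:MUAadj}, in particular $\partial_x^2 u(0,y)=0$ and the coupling $z(x,y,1)=-\tfrac{a\,h\mu_2}{\xi\|a\|_\infty}u$. The dispersive and nonlocal integrations by parts again leave only favorably signed boundary terms, the transport term now supplies $\tfrac{\xi\|a\|_\infty}{2h}\int_\Omega(z(1)^2-u^2)$, and substituting the coupling relation for $z(1)$ reduces the remaining interior terms to quadratics in $u$ controlled, once more via \eqref{eq:MUcond}, by $c\|V\|_{\mathcal{H}}^2$. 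With both $A-cI$ and $A^{\ast}-cI$ dissipative and $A$ closed and densely defined, Lumer--Phillips gives the conclusion.

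I expect the main obstacle to be the bookkeeping in the dissipativity estimate: correctly extracting all boundary terms from the two high-order derivatives and from the nonlocal operator $\partial_x^{-1}\partial_y^2$ (where the boundary condition $\psi(L,y)=0$ defining $\partial_x^{-1}$ must be invoked), and then checking that the window \eqref{eq:MUcond} for $\xi$ is precisely what lets the cross term $a\mu_2 z(1)u$ and the mismatch between the state-dependent weight $a(x,y)$ and the constant weight $\|a\|_\infty$ in the $\mathcal{H}$-inner product be absorbed into a single quasi-dissipativity constant $c$. Once this algebra is organized, the semigroup generation is immediate.
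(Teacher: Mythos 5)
Your proposal is correct and follows essentially the same route as the paper: the paper likewise establishes $\langle AU,U\rangle_{\mathcal{H}}\leq \frac{\xi\lVert a\rVert_\infty}{2h}\lVert u\rVert_{L^2(\Omega)}^2$ (and the analogous bound for $A^{\ast}$), and then invokes the Lumer--Phillips corollary for a densely defined closed operator whose shift and adjoint shift are both dissipative. Your integration-by-parts bookkeeping and the use of \eqref{eq:MUcond} to absorb the cross term match the intended computation behind the paper's inequality \eqref{eq:MUAdis}.
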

\begin{proof}
Let $U=(u,z)\in D(A)$, then
\begin{equation}\label{eq:MUAdis}
	\left\langle AU,U\right\rangle_\mathcal{H}\leq \frac{\xi\left\lVert{a}\right\rVert_\infty}{2h}\int_0^L\int_0^L  u^2(x,y)\,dx\,dy.
\end{equation}
Hence, for $\lambda=\frac{\xi\left\lVert{a}\right\rVert_\infty}{2h}$ we have $\left\langle(A-\lambda I)U,U\right\rangle_\mathcal{H}\leq 0$ (resp. $\left\langle(A^\ast-\lambda I)U,U\right\rangle_\mathcal{H} \leq 0,$ for $U=(u,z)\in D(A^\ast)$).  Since $A-\lambda I$ is a densely defined closed linear operator, and both $A-\lambda I$ and $(A-\lambda I)^\ast$ are dissipative,  $A$ generate an infinitesimal $C_0$-semigroup on $\mathcal{H}$.
\end{proof}

The next theorem establishes the existence of solutions for the abstract Cauchy problem~\eqref{eq:MUabs}. This result is a consequence of the previous proposition. 

\begin{theorem}
Assume that  $a\in L^\infty(\Omega)$ and~\eqref{eq:MUcond} is satisfied. Then, for each initial data $U_0\in \mathcal{H}$ there exists a unique mild solution $U\in C\left([0,\infty), \mathcal{H}\right)$ for the system~\eqref{eq:MUabs}. Moreover, if the initial data $U_0\in D(A)$ the solutions are classical such that $
U\in C\left([0,\infty), D(A)\right)\cap C^1\left([0,\infty), \mathcal{H}
\right).$
\end{theorem}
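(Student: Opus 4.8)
The plan is to invoke the abstract theory of strongly continuous semigroups, since the heavy lifting has already been carried out in Proposition~\ref{pr:MUdis}. That proposition establishes that the operator $A$ defined in~\eqref{eq:MUA}, together with its dense domain $D(A)$, is the infinitesimal generator of a $C_0$-semigroup $(e^{At})_{t\geq 0}$ on $\mathcal{H}$. Granting this, the conclusion is essentially a direct application of the standard existence-and-uniqueness theorem for the abstract Cauchy problem~\eqref{eq:MUabs}.

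First I would recall that for any $U_0 \in \mathcal{H}$ the function $U(t) := e^{At}U_0$ is, by definition, the unique mild solution of~\eqref{eq:MUabs}, and that the strong continuity of the semigroup gives $U \in C([0,\infty), \mathcal{H})$ immediately. Uniqueness in the mild class follows from the uniqueness of the semigroup generated by a prescribed operator together with the linearity of the problem: any two mild solutions share the same variation-of-constants representation and must therefore coincide.

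Next, for the regularity upgrade, I would restrict to $U_0 \in D(A)$ and use the basic invariance property of $C_0$-semigroups, namely that $D(A)$ is invariant under $e^{At}$ and that $t \mapsto e^{At}U_0$ is continuously differentiable whenever $U_0 \in D(A)$, with $\frac{d}{dt}e^{At}U_0 = A e^{At}U_0 = e^{At}AU_0$. This yields at once $U \in C^1([0,\infty), \mathcal{H})$, and also $U \in C([0,\infty), D(A))$, since $AU(t) = e^{At}AU_0$ is continuous in $\mathcal{H}$ and $D(A)$ is complete under the graph norm. Hence $U$ is a classical solution.

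The main obstacle is in fact not in this final step at all: the genuine work, verifying the dissipativity estimate~\eqref{eq:MUAdis} for $A-\lambda I$ and the analogous estimate for its adjoint $A^\ast$ from Lemma~\ref{le:MUAadj}, and thereby applying the Lumer--Phillips theorem, has already been completed in Proposition~\ref{pr:MUdis}. Once the generation of the semigroup is in hand, the present theorem is a textbook corollary, so the only care needed is to state the correspondence between mild/classical solutions and the semigroup precisely, and to record that the nonnegativity of $a$ together with condition~\eqref{eq:MUcond} are exactly the hypotheses used upstream to secure that dissipativity estimate.
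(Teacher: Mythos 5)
Your proposal is correct and follows exactly the route the paper takes: the paper states this theorem as an immediate consequence of Proposition~\ref{pr:MUdis}, and the standard semigroup facts you cite (mild solution $U(t)=e^{At}U_0$ for $U_0\in\mathcal{H}$, classical solution with $U\in C([0,\infty),D(A))\cap C^1([0,\infty),\mathcal{H})$ for $U_0\in D(A)$) are precisely the intended argument. No gaps.
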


Next results are devoted to showing \textit{a priori} and regularity estimates for the solutions of \eqref{eq:MUabs}.

\begin{proposition}
Let $a\in L^\infty(\Omega)$ be a nonnegative function and consider that \eqref{eq:MUcond} holds. Then, for any mild solution of~\eqref{eq:MUabs} the energy $E_u$, defined by~\eqref{eq:MUen}, is non-increasing and there exists a constant $C>0$ such that
\begin{equation}\label{eq:MUendis}
\begin{split}
\frac{d}{dt}E_u(t) \leq& -C
	\left(
		\int_0^L \partial^2_xu(0,y,t)^2\,dy+ \int_0^L({\partial_x^{-1}} \partial_yu(0,y,t))^2\,dy
	\right.\\
&\left.+\int_0^L\int_0^L a(x,y)u^2\,dx\,dy
		+\int_0^L\int_0^L a(x,y)u^2(x,y,t-h)\,dx\,dy%
	\right)
\end{split}
\end{equation}
where $C=C(\beta,\gamma,\xi,h,\mu_1,\mu_2)$ is given by
\begin{equation*}
C=\min
	\left\lbrace%
		-\frac{\beta}{2},\frac{\gamma}{2}, \mu_1-\frac{\mu_2}{2}-\frac{\xi}{2h}, -\frac{\mu_2}{h}+\frac{\xi}{2h}
	\right\rbrace.
\end{equation*}
\end{proposition}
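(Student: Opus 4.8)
The strategy is the classical multiplier (energy) method. Since the estimate \eqref{eq:MUendis} involves boundary traces such as $\partial_x^2 u(0,y,t)$ and $\partial_x^{-1}\partial_y u(0,y,t)$ that are only meaningful for sufficiently smooth solutions, I would first establish the differential identity for classical solutions, i.e.\ data in $D(A)$ (these exist by Proposition \ref{pr:MUdis}), and then pass to an arbitrary mild solution by density of $D(A)$ in $\mathcal{H}$ together with continuous dependence on the data, reading the resulting inequality in its integrated-in-time form. Writing $E_u = E_1 + E_2$ with $E_1 = \tfrac12\int_\Omega u^2$ and $E_2 = \tfrac{\xi}{2}\int_\Omega\int_0^1 a\,z^2$, where $z(x,y,\rho,t)=u(x,y,t-\rho h)$ solves the transport equation \eqref{eq:MUtp}, I differentiate each piece in $t$ and substitute $\partial_t u$ from \eqref{eq:MU}$_1$ and $\partial_t z = -h^{-1}\partial_\rho z$ from \eqref{eq:MUtp}.

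For $E_1$, I integrate by parts in $x$ and $y$ using the full set of homogeneous boundary conditions. The third-order term $\int_\Omega u\,\partial_x^3 u$ and the nonlinear term $\tfrac12\int_\Omega u\,\partial_x(u^2)$ vanish, while the fifth-order term produces, after three integrations by parts, only the surviving trace $-\tfrac12\int_0^L(\partial_x^2 u(0,y,t))^2\,dy$ (because $\partial_x^2 u(L,y,t)=0$); since its prefactor is $-\beta$ with $\beta<0$, this term is dissipative with coefficient $\tfrac{\beta}{2}$. The delicate contribution is $-\gamma\int_\Omega u\,\partial_x^{-1}\partial_y^2 u$: setting $\varphi = \partial_x^{-1}u$, so that $\partial_x\varphi = u$, $\varphi(L,y)=0$ and $\partial_x^{-1}\partial_y^2 u = \partial_y^2\varphi$, I integrate by parts first in $y$ (the edge terms drop since $\partial_x\varphi = u = 0$ on $y=0,L$) and then in $x$ (using $\varphi(L,y)=0$, hence $\partial_y\varphi(L,y)=0$), isolating $\tfrac12\int_0^L(\partial_x^{-1}\partial_y u(0,y,t))^2\,dy$ with the favorable sign coming from $\gamma>0$. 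This bookkeeping, which hinges on the precise meaning of $\partial_x^{-1}$ encoded in $X^2(\Omega)$ and on every boundary condition being used exactly once, is the main obstacle of the proof.

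For $E_2$, I write $z\,\partial_\rho z = \tfrac12\partial_\rho(z^2)$ and integrate the perfect derivative over $\rho\in(0,1)$, obtaining $\tfrac{\xi}{2h}\int_\Omega a\,u^2(x,y,t) - \tfrac{\xi}{2h}\int_\Omega a\,u^2(x,y,t-h)$, since $z(0)=u(t)$ and $z(1)=u(t-h)$. Adding the contributions of $E_1$ and $E_2$, the only indefinite term is the damping--delay cross term $-\mu_2\int_\Omega a\,u(t)u(t-h)$, which I control by Young's inequality $|u(t)u(t-h)| \le \tfrac12 u^2(t) + \tfrac12 u^2(t-h)$ (legitimate since $a\ge 0$ and $\mu_2>0$). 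Collecting terms leaves the coefficient $-\bigl(\mu_1 - \tfrac{\mu_2}{2} - \tfrac{\xi}{2h}\bigr)$ in front of $\int_\Omega a\,u^2(t)$ and $-\bigl(\tfrac{\xi}{2h} - \tfrac{\mu_2}{2}\bigr)$ in front of $\int_\Omega a\,u^2(t-h)$.

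Finally, condition \eqref{eq:MUcond}, namely $h\mu_2 < \xi < h(2\mu_1-\mu_2)$, is precisely what makes both of these interior coefficients strictly positive: the right inequality gives $\mu_1 - \tfrac{\mu_2}{2} - \tfrac{\xi}{2h} > 0$ and the left one gives $\tfrac{\xi}{2h} - \tfrac{\mu_2}{2} > 0$. Taking $C$ to be the minimum of these two quantities together with $-\tfrac{\beta}{2}$ and $\tfrac{\gamma}{2}$ then yields \eqref{eq:MUendis}, and since all four contributions are nonpositive, the monotonicity of $E_u$ along mild solutions follows immediately.
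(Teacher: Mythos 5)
Your proof is correct and follows exactly the paper's (much terser) argument: multiply the equation by $u$ and the transport equation by $z$, integrate by parts using every boundary condition, and absorb the damping--delay cross term by Young's inequality. The only discrepancy is in the final coefficient: your computation yields $\tfrac{\xi}{2h}-\tfrac{\mu_2}{2}$ for the delayed term, which is precisely the quantity made positive by \eqref{eq:MUcond}, whereas the constant printed in the statement, $-\tfrac{\mu_2}{h}+\tfrac{\xi}{2h}$, need not be positive under \eqref{eq:MUcond} and appears to be a typo, so your version is the consistent one.
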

\begin{proof}
First, multiply \eqref{eq:MUlin1}$_1$ by $u(x,y,t)$ and integrate by parts in $L^2(\Omega)$. After that, multiply \eqref{eq:MUlin1}$_5$ by $z(x,y,\rho,t)$ and integrate by parts in $L^2(\Omega\times(0,1))$.  Finally, adding the results, and the proposition follows.
\end{proof}

To use the contraction principle and to obtain the Kato smoothing effect, for $T>0$, we introduce the following sets:
\begin{equation*}
\begin{split}
\mathcal{B}_X
= &C\left([0,T], L^2(\Omega)\right)\cap L^2\left(0,T ,X_{x0}^2 (\Omega)\right), \\
\mathcal{B}_H= &C\left([0,T], L^2(\Omega)\right)\cap L^2\left(0,T ,H_{x0}^2 (\Omega)\right)
\end{split}
\end{equation*}
endowed with its natural norms
\begin{equation*}
\begin{aligned}
\left\lVert{y}\right\rVert_{\mathcal{B}_X}& 
= \max_{t\in[0,T]}\left\lVert{y(\cdot,\cdot,t)}\right\rVert_{L^2(\Omega)}+\left(\int_0^T \left\lVert{y(\cdot,\cdot, t)}\right\rVert_{X_{x0}^2(\Omega)}^2\,dt \right)^{\frac{1}{2}}, \\
\left\lVert{y}\right\rVert_{\mathcal{B}_H}&
= \max_{t\in[0,T]}\left\lVert{y(\cdot,\cdot,t)}\right\rVert_{L^2(\Omega)}+\left(\int_0^T \left\lVert{y(\cdot,\cdot, t)}\right\rVert_{H_{x0}^2(\Omega)}^2\,dt \right)^{\frac{1}{2}}.
\end{aligned}
\end{equation*}
Here, $X_{x0}^2(\Omega)$ denotes the space
\begin{equation}\label{eq:Xx0k}
X_{x0}^{k}(\Omega):=
\left\{
\begin{array}
[c]{l}
\varphi \in H_{x0}^k(\Omega) \colon {\partial_x^{-1}} \varphi(x,y)= \psi(x,y)\in H_{x0}^k(\Omega)\text{ with }\\\psi(L,y)=0\text{ and }\partial_x\psi(x,y)=\varphi(x,y).
\end{array}
\right\}
\end{equation}

\begin{proposition}\label{pr:MUreg}
Let $a\in L^\infty(\Omega)$ be a nonnegative function. Then, the map
$$
\left(u_0, z_0(\cdot,\cdot,-h(\cdot))\right)\in \mathcal{H}
\mapsto 
\left(u,z\right) \in
\mathcal{B}_X\times C\left([0,T], L^2\left(\Omega\times(0,1)\right)\right)
$$
is continuous and for $\left(u_0, z_0(\cdot,\cdot,-h(\cdot))\right)\in \mathcal{H}$, the following estimates are satisfied
\begin{equation}\label{eq:MUr1}
\begin{split}
&\frac{1}{2} \int_0^L\int_0^L  u^2(x,y)\,dx\,dy 
+ \frac{\xi}{2}\int_0^L\int_0^L \int_0^1 a(x,y) u^2(x,y,t-\rho h)\,d\rho\,dx\,dy 
\\ &\leq \frac{1}{2} \int_0^L\int_0^L  u_0^2\,dx\,dy 
+\frac{\xi}{2}\int_0^L\int_0^L \int_0^1 a(x,y)z_0^2(x,y,-\rho h)\,d\rho\,dx\,dy,
\end{split}
\end{equation}
\begin{equation}\label{eq:MUr2}
\begin{split}
\frac{3\alpha}{2}\int_0^T\int_0^L\int_0^L  \partial_x u(x,y,t)^2\,dx\,dy\,dt
-\frac{5\beta}{2}\int_0^T\int_0^L\int_0^L  \partial^2_xu(x,y,t)^2\,dx\,dy\,dt \\
\leq \mathcal{C}(a,\mu_1,\mu_2, L)(1+T)
	\left\lVert%
		{(u_0, z_0(\cdot,\cdot, -h(\cdot)))}
	\right\rVert_{\mathcal{H}}
	\end{split}
\end{equation}
and
\begin{equation}\label{eq:MUr3}
\begin{split}
&\left\lVert{u_0}\right\rVert_{L^2(\Omega)}^2 
\leq \frac{1}{T} \int_0^T\int_0^L\int_0^L  u^2(x,y,t)\,dx\,dy\,dt 
- \beta \int_0^T\int_0^L \partial^2_xu(0,y,t)^2\,dy\,dt 
\\&+ \gamma \int_0^T\int_0^L 
	\left(%
		{{\partial_x^{-1}} \partial_yu(0,y,t)}
	\right)^2\,dy\,dt+ \int_0^T\int_0^L\int_0^L  a(x,y) \mu_2 u^2(x,y,t-h)\,dx\,dy\,dt
\\&+(2\mu_1+\mu_2) \int_0^T\int_0^L\int_0^L  a(x,y) u^2(x,y,t)\,dx\,dy\,dt .
\end{split}
\end{equation}
\end{proposition}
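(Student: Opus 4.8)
The plan is to prove the three estimates by the energy/multiplier method applied to the linear system \eqref{eq:MUlin1}, performing every computation first for classical solutions (data in $D(A)$, whose existence is guaranteed by Proposition \ref{pr:MUdis}) and then extending to arbitrary data in $\mathcal{H}$ by density together with the linearity of the solution map; the asserted continuity of the map is then read off directly from the resulting bounds, since the $\mathcal{B}_X$-component follows from \eqref{eq:MUr2} (with the $\partial_x^{-1}$-part of the $X_{x0}^2$-norm recovered via Poincaré's inequality $\lVert\partial_x^{-1}u\rVert\le L^2\lVert u\rVert$) and the $C([0,T],L^2)$- and $z$-components from \eqref{eq:MUr1}. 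The whole difficulty is organizing the integrations by parts so that the only surviving boundary contributions sit at $x=0$ and are exactly the observed quantities $u_{xx}(0,y,t)^2$ and $(\partial_x^{-1}\partial_y u(0,y,t))^2$; the available boundary data are $u=u_x=0$ on $x\in\{0,L\}$, $u_{xx}=0$ on $x=L$, $u=0$ on $y\in\{0,L\}$, and $z(\cdot,\cdot,0,t)=u$.

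For \eqref{eq:MUr1} I would multiply \eqref{eq:MUlin1}$_1$ by $u$ and integrate over $\Omega$, and separately multiply the transport equation \eqref{eq:MUlin1}$_7$ by $\tfrac{\xi}{h}a(x,y)z$ and integrate over $\Omega\times(0,1)$, using $\int_0^1 z z_\rho\,d\rho=\tfrac12(z(1)^2-u^2)$. Adding the two identities, the third- and fifth-order terms leave only the boundary contributions $\tfrac{\beta}{2}\int u_{xx}(0)^2$ and $-\tfrac{\gamma}{2}\int(\partial_x^{-1}\partial_y u(0))^2$, after which Young's inequality on the cross term $\mu_2\int a u z(1)$ combined with the range condition \eqref{eq:MUcond} forces $\frac{d}{dt}E_u(t)\le 0$; integrating on $(0,t)$ gives \eqref{eq:MUr1}. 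This step is essentially the dissipation identity already recorded in the preceding proposition.

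For the Kato-type gain \eqref{eq:MUr2} the multiplier is $x u$. The crucial algebraic facts, obtained by repeated integration by parts in which every boundary term vanishes by the conditions at $x=0,L$, are $\alpha\int_0^L x u\,u_{xxx}\,dx=\tfrac{3\alpha}{2}\int_0^L u_x^2\,dx$ and $\beta\int_0^L x u\,u_{xxxxx}\,dx=-\tfrac{5\beta}{2}\int_0^L u_{xx}^2\,dx$. The time-derivative term produces $\tfrac12\frac{d}{dt}\int x u^2$, whose integral over $(0,T)$ is controlled by $\tfrac{L}{2}\lVert u_0\rVert_{L^2}^2$ and the energy bound; the anisotropic term gives $\gamma\int x u\,\partial_x^{-1}\partial_y^2 u=\tfrac{\gamma}{2}\int(\partial_x^{-1}\partial_y u)^2\ge 0$, which has the favorable sign and may be discarded; and the damping contributions $2\int x a(\mu_1 u^2+\mu_2 u z(1))$ are dominated by $2L\lVert a\rVert_\infty(\cdots)$ and then, via \eqref{eq:MUr1}, by $C(1+T)\lVert(u_0,z_0)\rVert_{\mathcal H}^2$ (so the right-hand side of \eqref{eq:MUr2} is to be read with the squared norm). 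Collecting terms yields \eqref{eq:MUr2}.

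Finally, for the observability-type bound \eqref{eq:MUr3} I would start from the energy identity of the first step, integrate it once in time to relate $\lVert u(t)\rVert^2$ to $\lVert u_0\rVert^2$, and then integrate a second time over $t\in(0,T)$. By Fubini the resulting double integral collapses to a single integral carrying the weight $(T-s)\le T$, so that dividing by $T$ produces the leading term $\tfrac1T\int_0^T\lVert u\rVert_{L^2}^2$, the boundary terms are absorbed with the crude bounds $-\beta\int_0^T\int u_{xx}(0)^2$ and $\gamma\int_0^T\int(\partial_x^{-1}\partial_y u(0))^2$, and a last Young's inequality on $\mu_2\int a u z(1)$, together with $z(x,y,1,t)=u(x,y,t-h)$, produces precisely the coefficients $2\mu_1+\mu_2$ and $\mu_2$ in front of the two damping integrals. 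I expect the main obstacle to be the bookkeeping of the boundary terms generated by the fifth-order operator and, above all, by the nonlocal anisotropic term $\gamma\partial_x^{-1}\partial_y^2 u$: it must be handled by writing $u=\partial_x\psi$ with $\psi=\partial_x^{-1}u$, $\psi(L,y)=0$, and integrating by parts in both $x$ and $y$ so that only $(\partial_x^{-1}\partial_y u(0,y,t))^2$ survives, which is exactly what the $X^2(\Omega)$-regularity built into $D(A)$ and the prior reduction to classical solutions make rigorous.
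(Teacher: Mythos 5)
Your proposal is correct and follows essentially the same route as the paper, which proves the proposition by the classical Morawetz multipliers: the energy identity (multiplier $u$ together with the transport equation tested against $a z$) for \eqref{eq:MUr1}, the multiplier $xu$ for \eqref{eq:MUr2}, and the multiplier $(T-t)u$ for \eqref{eq:MUr3} — your ``integrate the energy identity twice and apply Fubini'' step is exactly the $(T-t)u$ computation. Your observation that the right-hand side of \eqref{eq:MUr2} should carry the \emph{squared} norm $\left\lVert(u_0,z_0(\cdot,\cdot,-h(\cdot)))\right\rVert_{\mathcal{H}}^2$ is a correct reading of what the multiplier argument actually yields.
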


\begin{proof}
The proof is classical and uses the Morawetz multipliers.  Precisely,  first, \eqref{eq:MUr1} follows from \eqref{eq:MUendis}.  To get the other two inequalities for  $\left(u_0 , z_0(\cdot,\cdot, -h(\cdot))\right)\in\mathcal{H}$, multiplying~\eqref{eq:MUlin1}$_{5}$ by $z(x,\rho,t)$ and \eqref{eq:MUlin1}$_{1}$ by $xu(x,y,t)$ and integrating by parts in $\Omega\times (0,T)$, \eqref{eq:MUr2} holds. Finally,  multiplying \eqref{eq:MUlin1}$_{1}$ by $(T-t)u(x,y,t)$ and integrating by parts in $\Omega\times (0,T)$ we obtain \eqref{eq:MUr3}.
\end{proof}

\subsection{Linear system with source term}
We will study the system \eqref{eq:MUlin1}, with a source term $f(x,y,t)$ on the right-hand side. The next result ensures the well-posedness of this system.
\begin{proposition}
	Assume that  $a(x,y)\in L^\infty(\Omega)$ is a nonnegative function and that \eqref{eq:MUcond} is satisfied. For any $\left(u_0,z_0(\cdot,\cdot, -h(\cdot))\right)\in \mathcal{H}$ and $f\in L^1\left(0,T,L^2(\Omega)\right)$, there exists a unique mild solution for \eqref{eq:MUlin1} with the source term $f(x,y,t)$ on the right-hand side in the class
$$
\left(u,u(\cdot,\cdot, t-h(\cdot))\right)\in \mathcal{B}_X\times C\left([0,T], L^2(\Omega\times(0,1))\right).
$$
Moreover, we have
\begin{equation}\label{eq:MUsou1}
\left\lVert{(u,z)}\right\rVert_{C([0,T],\mathcal{H})}\leq e^{\frac{\xi\left\lVert{a}\right\rVert_\infty}{2h}T}
	\left(%
		\left\lVert{(u_0,z_0(\cdot,\cdot,-h(\cdot)))}\right\rVert_{\mathcal{H}} 
		+\left\lVert{f}\right\rVert_{L^1(0,T,L^2(\Omega))}
	\right)
\end{equation}
and
\begin{equation}\label{eq:MUsou2}
\delta\left\lVert{u}\right\rVert_{L^2(0,T,H_x^2(\Omega))}^2
\leq \mathcal{C}
	\left(%
		\left\lVert (u_0,z_0 (\cdot,\cdot,-h(\cdot)))\right\rVert_{\mathcal{H}}^2
		+\left\lVert{f}\right\rVert_{L^1(0,T,L^2(\Omega))}^2
	\right)
\end{equation}
where  
\begin{equation*}
\mathcal{C} = \mathcal{C}
	\left(a,\mu_1,\mu_2,L,T,h\right)=\frac{3L}{2}+L\left\lVert{a}\right\rVert_\infty(\mu_1+\mu_2)+\delta\left(1+T+e^{\frac{\xi\left\lVert{a}\right\rVert_\infty}{h}T}\right)
\end{equation*}
and $\delta=\min\left\lbrace{1,3\alpha/2, -5\beta/{2}}\right\rbrace$.
\end{proposition}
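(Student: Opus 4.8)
The plan is to recast the source problem as the inhomogeneous abstract Cauchy problem $U'(t)=AU(t)+F(t)$ with $U(t)=(u(\cdot,\cdot,t),z(\cdot,\cdot,\cdot,t))$, $F(t)=(f(\cdot,\cdot,t),0)$ and $U(0)=(u_0,z_0(\cdot,\cdot,-h(\cdot)))$, and then to reproduce the multiplier estimates of Proposition~\ref{pr:MUreg} while tracking the contribution of the forcing $f$. Since Proposition~\ref{pr:MUdis} provides a $C_0$-semigroup $(e^{At})_{t\ge0}$ and the dissipativity estimate~\eqref{eq:MUAdis} gives $\left\lVert e^{At}\right\rVert_{\mathcal{L}(\mathcal{H})}\le e^{\lambda t}$ with $\lambda=\frac{\xi\left\lVert a\right\rVert_\infty}{2h}$, and since $\left\lVert(f,0)\right\rVert_{\mathcal{H}}=\left\lVert f\right\rVert_{L^2(\Omega)}$ so that $F\in L^1(0,T,\mathcal{H})$, the classical variation-of-parameters theory yields a unique mild solution represented by Duhamel's formula $U(t)=e^{At}U_0+\int_0^t e^{A(t-s)}F(s)\,ds$ in $C([0,T],\mathcal{H})$; in particular $z\in C([0,T],L^2(\Omega\times(0,1)))$. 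Inserting the semigroup bound into this formula gives
\begin{equation*}
\left\lVert U(t)\right\rVert_{\mathcal{H}}\le e^{\lambda t}\left\lVert U_0\right\rVert_{\mathcal{H}}+\int_0^t e^{\lambda(t-s)}\left\lVert f(s)\right\rVert_{L^2(\Omega)}\,ds\le e^{\lambda T}\left(\left\lVert U_0\right\rVert_{\mathcal{H}}+\left\lVert f\right\rVert_{L^1(0,T,L^2(\Omega))}\right),
\end{equation*}
and taking the supremum over $t\in[0,T]$ produces exactly~\eqref{eq:MUsou1}.

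Next, to obtain the hidden-regularity bound~\eqref{eq:MUsou2}, I would argue first for regular data $U_0\in D(A)$ and smooth $f$ and then conclude by density using the continuity of the solution map. Multiplying the linear source-driven equation \eqref{eq:MUlin1}$_1$ by $xu$ and integrating by parts over $\Omega\times(0,T)$, exactly as in the derivation of~\eqref{eq:MUr2}, the term $\alpha\partial_x^3 u$ produces $\frac{3\alpha}{2}\left\lVert u_x\right\rVert_{L^2(0,T,L^2(\Omega))}^2$, the term $\beta\partial_x^5 u$ produces $-\frac{5\beta}{2}\left\lVert u_{xx}\right\rVert_{L^2(0,T,L^2(\Omega))}^2$ (positive since $\beta<0$), while the dispersion $\gamma\partial_x^{-1}\partial_y^2u$ and the time-derivative term contribute boundary and endpoint quantities. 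The forcing adds $\int_0^T\int_\Omega xuf$, which I would bound by $L\left\lVert u\right\rVert_{C([0,T],L^2(\Omega))}\left\lVert f\right\rVert_{L^1(0,T,L^2(\Omega))}$ and absorb through~\eqref{eq:MUsou1} and Young's inequality, whereas the damping/delay contributions $a(\mu_1u+\mu_2z(1))$ are lower order and controlled by $L\left\lVert a\right\rVert_\infty(\mu_1+\mu_2)$ times energy quantities already dominated by~\eqref{eq:MUsou1}. Grouping the coefficient $\delta=\min\{1,3\alpha/2,-5\beta/2\}$ in front of $\left\lVert u\right\rVert_{L^2(0,T,H_x^2(\Omega))}^2=\left\lVert u\right\rVert_{L^2}^2+\left\lVert u_x\right\rVert_{L^2}^2+\left\lVert u_{xx}\right\rVert_{L^2}^2$ yields~\eqref{eq:MUsou2} with the stated constant $\mathcal{C}$. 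Membership of $u$ in $\mathcal{B}_X$ then follows by combining the $C([0,T],L^2(\Omega))$ bound with the $L^2(0,T,H_x^2(\Omega))$ bound and the Poincaré-type control $\left\lVert\partial_x^{-1}u\right\rVert\le L^2\left\lVert u\right\rVert$ that links $H_{x0}^2(\Omega)$ and $X_{x0}^2(\Omega)$.

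The hard part will be the multiplier step for~\eqref{eq:MUsou2}: one must carefully integrate by parts the fifth- and third-order spatial terms against $xu$ and verify that every boundary contribution generated at $x=0$ and $x=L$ either vanishes under the boundary conditions of~\eqref{eq:MUlin1} or carries a favorable sign, and one must handle the delay coupling through $z(1)$ by closing the estimate with the energy bound~\eqref{eq:MUsou1}. Once this boundary algebra is under control, the remaining manipulations are routine and mirror Proposition~\ref{pr:MUreg}, the only genuinely new ingredient being the systematic absorption of the forcing term via~\eqref{eq:MUsou1}.
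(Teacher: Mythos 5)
Your proposal is correct and follows essentially the same route as the paper: the semigroup bound $\lVert e^{tA}\rVert_{\mathcal{L}(\mathcal{H})}\leq e^{\frac{\xi\lVert a\rVert_\infty}{2h}t}$ together with Duhamel's formula for \eqref{eq:MUsou1}, and the multiplier $xu$ as in Proposition~\ref{pr:MUreg} with the forcing term absorbed via $\left\lvert\int_0^T\int_\Omega xfu\right\rvert\leq \frac{L}{2}\lVert u\rVert_{C([0,T];L^2(\Omega))}^2+\frac{L}{2}\lVert f\rVert_{L^1(0,T,L^2(\Omega))}^2$ for \eqref{eq:MUsou2}. The paper's proof is just a terser version of exactly this argument.
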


\begin{proof}
Note that $A$ is an infinitesimal generator of a $C_0$-semigroup $(e^{tA})_{t\geq0}$ satisfying $\left\lVert{e^{tA}}\right\rVert_{\mathcal{L}(\mathcal{H})}\leq e^{\frac{\xi\left\lVert{a}\right\rVert_\infty}{2h}t}$ and the system can be rewritten as a first order system with source term $(f(\cdot,\cdot,t),0)$, showing the well-posed in $C([0,T],\mathcal{H})$.  Finally, observe that the right-hand side is not homogeneous, since
\begin{equation*}
	\left\lvert%
		\int_0^T\int_{\Omega} xf(x,y,t) u(x,y,t)\,dx\,dy\,dt
	\right\rvert%
\leq \frac{L}{2} \left\lVert{u}\right\rVert_{C([0,T]; L^2(0,L))}^2+ \frac{L}{2}\left\lVert{f}\right\rVert_{L^1(0,T, L^2(\Omega))}^2,
\end{equation*}
showing the result.
\end{proof}

\subsection{Nonlinear system: Global results.}\label{ss:MUnl} In this last part, we consider the nonlinear term $uu_x$ as a source term.
\begin{proposition}\label{pr:NLcont}
If $u\in \mathcal{B}_X$ then $uu_x\in L^1(0,T; L^2(\Omega))$ and the map
$
u\in\mathcal{B}_X \mapsto u\partial_xu\in L^1(0,T; L^2(\Omega))
$
is continuous. In particular, exists $K>0$,  such that, for all $u,v\in \mathcal{B}_X$ we have
\begin{equation}\label{eq:MUnl}
\left\lVert{u\partial_xu-v\partial_xv}\right\rVert_{L^1(0,T,L^2(\Omega))}
\leq K
	\left(%
		\left\lVert{u}\right\rVert_{\mathcal{B}_X}
		+\left\lVert{v}\right\rVert_{\mathcal{B}_X}
	\right)\left\lVert{u-v}\right\rVert_{\mathcal{B}_X}.
\end{equation}
\end{proposition}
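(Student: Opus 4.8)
The plan is to derive a single bilinear, pointwise-in-time product estimate and then integrate it in $t$, the two assertions of the proposition being two readings of the same inequality. The first step is the algebraic splitting of the difference of the two nonlinearities into manageable bilinear pieces. Writing $w=u-v$, I would use the non-divergence decomposition
\begin{equation*}
u\partial_x u-v\partial_x v=u\,\partial_x w+w\,\partial_x v,
\end{equation*}
so that the whole estimate \eqref{eq:MUnl} reduces to controlling each term of the form $p\,\partial_x q$ in $L^1(0,T,L^2(\Omega))$ by $\lVert p\rVert_{\mathcal{B}_X}\lVert q\rVert_{\mathcal{B}_X}$, with $w$ appearing as one of the two factors in every piece. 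Note that the membership claim $u\partial_x u\in L^1(0,T,L^2(\Omega))$ together with the continuity of $u\mapsto u\partial_x u$ is exactly the case $v=0$ (equivalently $p=q=u$) of the same bilinear bound, so there is nothing extra to prove once the bilinear estimate is in hand.

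The heart of the matter is a pointwise-in-time estimate of the product. Fixing $t$, I aim at a Gagliardo--Nirenberg-type bound of the form
\begin{equation*}
\left\lVert p\,\partial_x q\right\rVert_{L^2(\Omega)}\leq C\,\lVert p\rVert_{L^2(\Omega)}^{1/2}\lVert p\rVert_{X_{x0}^2(\Omega)}^{1/2}\lVert q\rVert_{L^2(\Omega)}^{1/2}\lVert q\rVert_{X_{x0}^2(\Omega)}^{1/2}.
\end{equation*}
I would obtain it by working first in the $x$ variable for a.e.\ fixed $y$: the homogeneous boundary conditions built into $X_{x0}^2(\Omega)$ give $p(0,y)=p(L,y)=0$, so the one-dimensional Agmon inequality yields $\lVert p(\cdot,y)\rVert_{L_x^\infty}^2\leq 2\lVert p(\cdot,y)\rVert_{L_x^2}\lVert \partial_x p(\cdot,y)\rVert_{L_x^2}$, and integration by parts gives the interpolation $\lVert \partial_x q(\cdot,y)\rVert_{L_x^2}^2\leq\lVert q(\cdot,y)\rVert_{L_x^2}\lVert \partial_x^2 q(\cdot,y)\rVert_{L_x^2}$ and similarly for $p$. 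Bounding $\int_0^L p^2(\partial_x q)^2\,dx\leq\lVert p(\cdot,y)\rVert_{L_x^\infty}^2\lVert \partial_x q(\cdot,y)\rVert_{L_x^2}^2$ and then distributing the $y$-integration through Theorem \ref{thm:Besov} (exactly as in \eqref{eq:MUgn} and in the bound for $v^n\partial_x v^n$ used in Section \ref{Sec2}) produces the displayed estimate. I expect this anisotropic step to be the main obstacle: only $x$-regularity is available in $\mathcal{B}_X$, so the delicate point is to land the product in $L^2$ in space (not merely $L^1$), which forces one to spend the two $x$-derivatives carefully and to let Theorem \ref{thm:Besov} carry the $y$-direction.

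With the pointwise inequality established, the time integration is routine. Placing the two low-order factors in $C([0,T],L^2(\Omega))$ (taking the supremum in $t$) and the two high-order factors in $L^2(0,T,X_{x0}^2(\Omega))$, and applying Hölder's inequality in $t$, I would get
\begin{equation*}
\left\lVert p\,\partial_x q\right\rVert_{L^1(0,T,L^2(\Omega))}\leq C\,T^{1/2}\lVert p\rVert_{\mathcal{B}_X}\lVert q\rVert_{\mathcal{B}_X},
\end{equation*}
the factor $T^{1/2}$ coming from the finite time interval. Applying this to the two pieces $(p,q)=(u,w)$ and $(p,q)=(w,v)$, summing, and factoring out $\lVert w\rVert_{\mathcal{B}_X}=\lVert u-v\rVert_{\mathcal{B}_X}$ gives \eqref{eq:MUnl} with $K=C\,T^{1/2}$ depending only on $L$, $T$ and the constant of Theorem \ref{thm:Besov}; taking $v=0$ simultaneously yields $u\partial_x u\in L^1(0,T,L^2(\Omega))$ and the Lipschitz continuity of the map, completing the proof.
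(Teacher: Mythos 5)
Your overall architecture (bilinear splitting $u\partial_xu-v\partial_xv=u\partial_xw+w\partial_xv$, a pointwise-in-$t$ product estimate in $L^2(\Omega)$, then H\"older in time) is the same as the paper's, which compresses the whole argument into ``H\"older plus the embedding $H_{x0}^2(\Omega)\hookrightarrow L^\infty(\Omega)$'' and then handles the $\partial_x^{-1}$ component by Poincar\'e. The difference is that you try to prove the product estimate honestly, and that is exactly where the argument breaks. Your slice-wise step is fine: for a.e.\ fixed $y$ the one-dimensional Agmon and interpolation inequalities give
\begin{equation*}
\int_0^L p^2(x,y)\,(\partial_xq)^2(x,y)\,dx\;\leq\;2\,\lVert p(\cdot,y)\rVert_{L_x^2}\,\lVert \partial_xp(\cdot,y)\rVert_{L_x^2}\,\lVert q(\cdot,y)\rVert_{L_x^2}\,\lVert \partial_x^2q(\cdot,y)\rVert_{L_x^2}.
\end{equation*}
But integrating this in $y$ leaves a product of four $y$-dependent factors, each of which is only \emph{square}-integrable in $y$ (its $L^2_y$ norm being the corresponding $L^2(\Omega)$ norm); no choice of H\"older exponents closes the estimate unless at least two of the factors are controlled in $L^\infty_y$ or $L^4_y$, and the space $\mathcal{B}_X$ contains no $y$-derivatives whatsoever to provide that. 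Theorem \ref{thm:Besov} cannot ``carry the $y$-direction'' here: reading its balance condition $\beta-\frac1q=\sum_j\mu_j(\alpha^{(j)}-\frac{1}{p^{(j)}})$ in the $y$-component, with all $\alpha^{(j)}_y=0$ and all $p^{(j)}=2$, forces $-\frac1q=-\frac12\sum_j\mu_j=-\frac12$, i.e.\ $q=2$ --- purely $x$-derivative data in $L^2$ can never be upgraded to $L^4$ or $L^\infty$ in two dimensions. Indeed your claimed pointwise inequality is false as stated: taking $p=q=f(x)g_\varepsilon(y)$ with $g_\varepsilon=\varepsilon^{-1/2}\chi_{(0,\varepsilon)}$ (mollified) keeps every norm on the right-hand side bounded while $\lVert p\partial_xq\rVert_{L^2(\Omega)}^2=\lVert ff'\rVert_{L^2}^2\,\varepsilon^{-1}\to\infty$.

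To be fair, you have put your finger on the genuine crux: the paper's own one-line proof invokes the embedding $H_{x0}^2(\Omega)\hookrightarrow L^\infty(\Omega)$, which is an anisotropic space controlling only $x$-derivatives and therefore suffers from exactly the same counterexample; the $T^{1/4}$ versus your $T^{1/2}$ is immaterial. So your proposal does not contain a new error so much as it fails to supply the missing ingredient --- some form of $y$-regularity or a genuinely anisotropic product estimate landing in $L^2(\Omega)$ --- that the paper's proof also leaves unjustified. As written, the key lemma of your argument is unproved (and, in the generality you state it, untrue), so the proof does not go through.
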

\begin{proof}
The Hölder inequality and the Sobolev embedding $H_{x0}^2(\Omega)\hookrightarrow L^\infty(\Omega)$ gives us
\begin{equation}\label{eq:MUnl1}
\left\lVert u\partial_xu-v\partial_xv \right\rVert_{L^1(0,T, L^2(\Omega))}
 \leq C_1\cdot C \cdot T^{\frac{1}{4}}\left(\left\lVert{u}\right\rVert_{\mathcal{B}_{H}} + \left\lVert{v}\right\rVert_{\mathcal{B}_{H}}
	\right)\left\lVert{u-v}\right\rVert_{\mathcal{B}_H},
\end{equation}
for $u,v\in \mathcal{B}_X$.  Note that, $u\in\mathcal{B}_X$ implies that 
$u(\cdot,\cdot, t)\in H_{x0}^2(\Omega)$
and consequently 
$u(\cdot,\cdot,t)\in H_{x0}^1(\Omega)$ and
$u_x(\cdot,\cdot,t)\in H_{x0}^1(\Omega)$. 
Here, using the definition of the operator ${\partial_x^{-1}}$ and the 
Poincaré's inequality we obtain,
\begin{equation}\label{eq:MUnl2}
\left\lVert
	{\partial_x^{-1}}(u\partial_xu)
\right\rVert_{L^1(0,T, L^2(\Omega))} \leq L^2
\left\lVert
	u\partial_xu
\right\rVert_{L^1(0,T, L^2(\Omega))}
\end{equation}
So,  from~\eqref{eq:MUnl1}, with $v=0$,  and~\eqref{eq:MUnl2} we get $u\partial_xu \in L^1(0,T, L^2(\Omega))$ and the proof is complete.
\end{proof}

We prove the global well-posedness of the K-KP-II with delay term.
\begin{proposition}\label{pr:MUkato}
Let $L>0$, $a(x,y)\in L^\infty(\Omega)$ be a nonnegative function and that \eqref{eq:MUcond} holds. Then, for all initial data $(u_0,z_0(\cdot,\cdot, -h(\cdot))\in \mathcal{H}$, there exists a unique $u\in \mathcal{B}_X$ solution of~\eqref{eq:MU}. Moreover, there exist constants $\mathcal{C}>0$ and $\delta\in(0,1]$ such that 
\begin{equation}\label{eq:MUkato}
\delta\left\lVert u \right\rVert_{L^2(0,T, H_x^2(\Omega))}^2
\leq \mathcal{C}\left(%
	\left\lVert (u_0,z_0(\cdot,\cdot,-h(\cdot))) \right\rVert_{\mathcal{H}}^2
+
	\left\lVert (u_0,z_0(\cdot,\cdot,-h(\cdot))) \right\rVert_{\mathcal{H}}^{\frac{10}{3}}
	\right).
\end{equation}
\end{proposition}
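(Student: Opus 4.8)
The plan is to construct the solution by a contraction argument built on the inhomogeneous linear theory, and then to make it global using the dissipation of the energy. First I would fix $T>0$ and, for $v\in\mathcal{B}_X$, let $\Gamma(v)=u$ be the mild solution of the linear system \eqref{eq:MUlin1} with source term $f=-\tfrac12\partial_x(v^2)=-v\partial_x v$ and the prescribed initial data $(u_0,z_0(\cdot,\cdot,-h(\cdot)))$. By Proposition~\ref{pr:NLcont} we have $f\in L^1(0,T;L^2(\Omega))$, so the well-posedness of the linear system with source term shows that $\Gamma\colon\mathcal{B}_X\to\mathcal{B}_X$ is well defined, and estimate \eqref{eq:MUsou1} controls $\left\lVert\Gamma(v)\right\rVert_{\mathcal{B}_X}$. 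Combining \eqref{eq:MUsou1} with the Lipschitz estimate \eqref{eq:MUnl}, on a closed ball $B_R\subset\mathcal{B}_X$ whose radius $R$ is comparable to $\left\lVert(u_0,z_0(\cdot,\cdot,-h(\cdot)))\right\rVert_{\mathcal{H}}$ one gets a bound of the form $\left\lVert\Gamma(u)-\Gamma(v)\right\rVert_{\mathcal{B}_X}\leq CT^{1/4}\big(\left\lVert u\right\rVert_{\mathcal{B}_X}+\left\lVert v\right\rVert_{\mathcal{B}_X}\big)\left\lVert u-v\right\rVert_{\mathcal{B}_X}$. Choosing $T$ small enough makes $\Gamma$ a contraction mapping $B_R$ into itself, so the Banach fixed point theorem yields a unique local solution $u\in\mathcal{B}_X$.

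To pass from local to global, I would exploit that the nonlinearity is conservative in $L^2$: multiplying \eqref{eq:MU} by $u$ and integrating over $\Omega$ kills the term $\int_\Omega u^2\partial_x u=\tfrac13\int_\Omega\partial_x(u^3)=0$ by the boundary conditions, so the energy $E_u$ in \eqref{eq:MUen} satisfies exactly the dissipation inequality \eqref{eq:MUendis} and is non-increasing. In particular $\left\lVert u(\cdot,\cdot,t)\right\rVert_{L^2(\Omega)}$ remains bounded by the data uniformly in time. Since the local existence time delivered by the contraction depends only on the $\mathcal{H}$-norm of the initial data, this uniform $L^2$ bound rules out blow-up and allows me to iterate the construction on successive intervals of fixed length, giving a unique solution $u\in\mathcal{B}_X$ on $[0,T]$ for every $T>0$.

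For the smoothing estimate \eqref{eq:MUkato} I would multiply the equation by $xu$ and integrate by parts over $\Omega\times(0,T)$, just as in the derivation of \eqref{eq:MUr2}. This produces on the left-hand side the coercive quantity $\tfrac{3\alpha}{2}\left\lVert\partial_x u\right\rVert_{L^2}^2-\tfrac{5\beta}{2}\left\lVert\partial_x^2 u\right\rVert_{L^2}^2$ (recall $\beta<0$), which controls $\delta\left\lVert u\right\rVert_{L^2(0,T;H_x^2(\Omega))}^2$, while the boundary, damping, and delay contributions are treated exactly as in the linear case and estimated through the energy by $\left\lVert(u_0,z_0(\cdot,\cdot,-h(\cdot)))\right\rVert_{\mathcal{H}}$. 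The genuinely new term is the cubic one, $\int_0^T\!\!\int_\Omega xu^2\partial_x u\,dx\,dy\,dt=-\tfrac13\int_0^T\!\!\int_\Omega u^3\,dx\,dy\,dt$, and controlling it is the main obstacle.

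I would dominate the cubic term by the Gagliardo--Nirenberg inequality \eqref{eq:MUkato2}, itself a consequence of Theorem~\ref{thm:Besov}, which bounds $\int_\Omega u^3$ by $\tfrac{\varepsilon^4}{4}\left\lVert u\right\rVert_{H_x^2(\Omega)}^2+\tfrac34(CL/\varepsilon)^{4/3}\left\lVert u\right\rVert_{L^2(\Omega)}^{10/3}$. Choosing $\varepsilon$ small enough absorbs the $H_x^2$-part into the coercive left-hand side, and integrating the remaining $\left\lVert u\right\rVert_{L^2(\Omega)}^{10/3}$ in time while invoking the global $L^2$ bound produces the contribution $\left\lVert(u_0,z_0(\cdot,\cdot,-h(\cdot)))\right\rVert_{\mathcal{H}}^{10/3}$. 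Collecting this with the quadratic terms gives \eqref{eq:MUkato} with $\delta=\min\{1,3\alpha/2,-5\beta/2\}$. The delicate point is precisely this absorption: one must work in $\mathcal{B}_X$ (not merely $C([0,T];L^2)$) so that $u\partial_x u$ makes sense and the full $H_x^2$ regularity is available to absorb the top-order cubic term, and the exponent $10/3$ in the final inequality is dictated by the scaling in \eqref{eq:MUkato2}.
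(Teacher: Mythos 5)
Your proposal is correct and follows essentially the same route as the paper: a Banach fixed-point argument in $\mathcal{B}_X$ built on the inhomogeneous linear theory (estimates \eqref{eq:MUsou1}--\eqref{eq:MUsou2} together with Proposition~\ref{pr:NLcont}), an a priori $L^2$ bound to globalize, and the Morawetz multiplier $xu$ combined with the Gagliardo--Nirenberg bound \eqref{eq:MUkato2} to absorb the cubic term and produce the exponent $\tfrac{10}{3}$ in \eqref{eq:MUkato}. The only cosmetic difference is that the paper globalizes via the Gronwall-type bound \eqref{eq:MUkato1} rather than strict monotonicity of the energy, which changes nothing in substance.
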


\begin{proof}
To obtain the global existence of solutions we show the local existence and use the \textit{a priori} estimate below, which is proved using the multipliers method and Gronwall's inequality:
\begin{equation}\label{eq:MUkato1}
\left\lVert{(u(\cdot,\cdot,t),u(\cdot,\cdot,t-h))}\right\rVert_{\mathcal{H}}^2 
\leq e^{\frac{\xi\left\lVert{a}\right\rVert_\infty}{h}t}
\left\lVert{(u_0,z_0(\cdot,\cdot,-h(\cdot)))}\right\rVert_{\mathcal{H}}^2
\end{equation}

With the previous inequality in hands, the local existence and uniqueness of solutions of~\eqref{eq:MU} holds.  Precisely,  pick $(u_0,z_0(\cdot,\cdot,-h(\cdot)))\in \mathcal{H}$ and $u\in \mathcal{B}_X$, consider the map $\Phi\colon \mathcal{B}_X\to \mathcal{B}_X$ defined by $\Phi(u)=\tilde{u}$, where $\tilde{u}$ is solution of \eqref{eq:MU} with the source term $f=-u\partial_xu$.  Then, $u\in \mathcal{B}_X$ is the solution for~\eqref{eq:MU} if and only if $u$ is a fixed point of $\Phi$.  To show this, we need to prove that $\Phi$ is a contraction.  

If $T<1$ from~\eqref{eq:MUsou1},~\eqref{eq:MUsou2} and Proposition~\ref{pr:NLcont} we get
\begin{equation*}
\begin{aligned}
\left\lVert \Phi u \right\rVert_{\mathcal{B}_X}
 \leq & \sqrt{\delta^{-1}\mathcal{C}}
	\left(%
		1+\sqrt{T}+e^{\frac{\xi\left\lVert{a}\right\rVert_\infty}{2h}T}
	\right)%
		\left\lVert (u_0,z_0 (\cdot,\cdot,-h(\cdot)))\right\rVert_{\mathcal{H}}\\
& + \sqrt{\delta^{-1}\mathcal{C}}\cdot C_1\cdot C
	\left(%
		2T^{\frac{1}{4}} + T^{\frac{1}{4}} e^{\frac{\xi\left\lVert{a}\right\rVert_{\infty}}{2h}T}
	\right)%
		\left\lVert{u}\right\rVert_{\mathcal{B}_X}^2
\end{aligned}
\end{equation*}
and
\begin{equation*}
\left\lVert{ \Phi{u}-\Phi{v} }\right\rVert_{\mathcal{B}_X} 
\leq  S
	\left(%
		1+\sqrt{T}+e^{\frac{\xi\left\lVert{a}\right\rVert_\infty}{2h}T}
	\right) T^{\frac{1}{4}} \left(
		\left\lVert{u}\right\rVert_{\mathcal{B}_{X}} + \left\lVert{v}\right\rVert_{\mathcal{B}_{X}}
	\right)\left\lVert{u-v}\right\rVert_{\mathcal{B}_X},
\end{equation*}
where $S=\sqrt{\delta^{-1}\mathcal{C}}\cdot C_1\cdot C$. Now,  consider the application $\Phi$ restricted to the closed ball
$$
\left\lbrace{ %
	u\in \mathcal{B} \colon \left\lVert{u}\right\rVert_{\mathcal{B}_X}\leq R
}\right\rbrace,
$$
with $R>0$ such that $R=4\sqrt{\delta^{-1} \mathcal{C}} \left\lVert (u_0, z_0(\cdot,\cdot,-h(\cdot)) \right\rVert_{\mathcal{H}}$ and $T>0$ satisfying
\begin{equation*}
T<1,\quad e^{\frac{\xi\left\lVert{a}\right\rVert_{\infty}}{2h}T} <2,\quad 2T^{\frac{1}{4}}+T^{\frac{1}{4}}e^{\frac{\xi\left\lVert{a}\right\rVert_{\infty}}{2h}T}< \frac{1}{2\sqrt{\delta^{-1}\mathcal{C}}\cdot C_1\cdot C_2 R}
\end{equation*}
holds that $\Phi$ is a contraction.  From Banach's fixed point theorem,  application $\Phi$ has a unique fixed point.
\end{proof}

\end{document}